\theoremstyle{plain}
\newtheorem{definition}{\bf Definition}
\newtheorem{remark}{Remark}
\newtheorem{theorem}{\bf Theorem}
\numberwithin{equation}{section}
\begin{document}
	
	\title[Stereographic Projections on Some Quadric Surfaces]{Stereographic Projections on Some Quadric Surfaces}
	
	\author[W.F.C. Barboza, T.F. Cruz, R.B. Leal]{W.F.C. Barboza$^{\ast}$, T.F. Cruz and R.B. Leal}
	
	\address{$^1$ Unidade Acadêmica de Matem\'atica, Universidade Federal de Campina Grande, 58.429-970 Campina Grande, Para\'iba, Brazil.}
	\email{weiller@mat.ufcg.edu.br}
	\email{thiagofc781@gmail.com}
	\email{rubensbarretoleal@gmail.com}
	
	\subjclass[2010]{Primary 53C42; Secondary 53A10, 53C20, 53C50.}
	
	\keywords{Stereographic Projection; Surface parametrization; Quadric surfaces.}
	
	\thanks{$^\ast$ Corresponding author.}
	
	\begin{abstract}
	In this work, we present an adaptation of the classical stereographic projection, originally formulated for the sphere, now considering the context of the ellipsoid and the elliptic paraboloid. We begin by constructing the stereographic projections for both quadric surfaces separately, analyzing the geometric particularities of each surface and the challenges arising from their variable curvatures and, in the case of the paraboloid, its non-compactness. In the final part of the work, we establish results concerning the eccentricities, curvatures, arc length, and areas of the ellipses formed by the intersection of the quadrics with horizontal sections and their corresponding projections onto the plane-xy.
	\end{abstract}
	
	\maketitle
	
	\section{Introduction}
	
		The stereographic projection is a classical geometric construction that defines a mapping between the sphere (with one point removed) and the plane. Its use dates back to antiquity, having been employed by Hipparchus and Ptolemy for representing the celestial sphere on flat maps. During the Middle Ages and the Renaissance, instruments such as the astrolabe made use of this projection. However, its formal treatment was only consolidated with the development of analytic geometry and differential geometry, particularly in the 18th and 19th centuries, for more details about this see Gentili, Simonutti and Struppa in \cite{gentili}, Yavetz in \cite{yavetz} and Sidoli, Bergreen in \cite{sidoli}.

	From a mathematical perspective, the stereographic projection of the unit sphere $S^2 \subset \mathbb{R}^3$ given by
	$$
	S^2 = \{ (x,y,z) \in \mathbb{R}^3 \quad ; \quad x^2 + y^2 + z^2 = r^2 \}, 
	$$
	consists in associating to each point $ P $ on the sphere (except the north pole $ N = (0, 0, r) $) a point $ Q = (u,v,0) $ on the plane $ z = 0 $, defined as the intersection between this plane and the line passing through $ N $ and $P$ (See \cite{docarmo}). Let $ P = (x, y, z) \in S^2 \setminus \{N\} $. The stereographic projection maps $ P $ at the point of the xy plano $Q$ is the map
	\begin{equation}
		\begin{array}{rcl}
			\varphi^{-1} & : & S^2 \setminus \{N \} \longrightarrow  \mathbb{R}^2 \\
			&  & (x,y,z) \longmapsto \varphi^{-1}(x,y,z) = \left( \dfrac{rx}{r-z}, \dfrac{ry}{r - z} \right).
		\end{array}
	\end{equation}
	
	The inverse mapping can also be written explicitly, allowing the establishment of a diffeomorphism between $ S^2 \setminus \{N\} $ and $\mathbb{R}^2 $. More specifically, we know that the inverse application of the parameterization of the stereographic projection of the sphere is given by
	\begin{equation}\label{parametriza esfera}
		\begin{array}{rcl}
			\varphi & : & \mathbb{R}^2 \longrightarrow S^2 \setminus \{N \} \\
			&  & (u,v) \longmapsto \varphi(u,v) = \left( \dfrac{2r^2 u}{r^2 + u^2 + v^2}, \dfrac{2r^2 v}{r^2 + u^2 + v^2}, r \dfrac{u^2 + v^2 - r^2}{r^2 + u^2 + v^2} \right).
		\end{array}
	\end{equation}
	
	This projection has several remarkable properties. First, it is a bijective and differentiable map with a differentiable inverse that is, a local diffeomorphism. Moreover, it preserves angles between intersecting curves, making it a conformal mapping. This property makes it useful not only in differential geometry, but also in complex analysis, where it allows for the compactification of the complex plane $\mathbb{C}$, identifying it with the Riemann sphere, for more see \cite{carroll}. In the context of the theory of regular surfaces, the stereographic projection provides a differentiable parametrization of the sphere (excluding one point) onto the plane, whose differential is injective at every point in its domain. This satisfies the criteria of the definition of a regular surface, as established by authors such as Do Carmo~\cite{docarmo}, and enables the study of local properties of the sphere through planar coordinates.
	
	In this work, we initially propose a generalization of the stereographic construction used for the sphere to the case of the ellipsoid a surface defined by a second-order quadratic equation and characterized by variable curvature. Constructing a stereographic projection for the ellipsoid involves new challenges, both geometric and analytic, as spherical symmetry is no longer present. Our goal in the section \ref{secao elipsoide} is to construct a mapping that parametrizes the ellipsoid in a manner analogous to the stereographic projection of the sphere, analyzing its properties and verifying whether the resulting parametrization satisfies the conditions of a regular surface. This approach opens up possibilities for conformal visualization of ellipsoids and for the local study of their differential properties. In a second part of this study in section \ref{secao paraboloide}, We also developed the stereographic projection for the case of the elliptic paraboloid using a process entirely analogous to that of the ellipsoid. In the third and final part, we were able to establish interesting results arising from the expressions of the stereographic projection maps in both cases: for the ellipsoid and the elliptic paraboloid. In particular, we obtain the following results: Theorem \ref{teorema1}, \ref{teorema2}, \ref{teorema3}, \ref{teorema4}, \ref{teorema5}, \ref{teorema6}, \ref{teorema7}, \ref{teorema8}. Where we relate eccentricities, curvatures, arc lengths, and areas between the ellipses formed by horizontal sections and their respective projections through applications \eqref{inversa_elipsoide} and \eqref{inversa_paraboloide}.
	
\section{Initial Settings}\label{section2}

The theory of regular surfaces is one of the cornerstones of differential geometry, focusing on the local and global study of smoothly immersed surfaces in three-dimensional space $\mathbb{R}^3$. We now present some preliminary definitions concerning curves in $\mathbb{R}^2$ from \cite{docarmo}, followed by the formal definition of regular surfaces, along with results that will be addressed in this work.

\vspace{0.3cm}

\begin{definition}
	A parametrized differentiable curve in a differentiable map $\alpha: I \rightarrow \mathbb{R}^2$ of an open interval $I=(a,b)$ of the real line $\mathbb{R}$ into $\mathbb{R}^2$.
\end{definition}

\noindent The word \textit{differentiable} in this definition means that $\alpha$ is a correspondence which maps each $t \in I$ into a point $ \alpha(t) = (x(t), y(t)) \in \mathbb{R}^2 $ in such a way that the functions $x(t)$, $y(t)$ are differentiable. The variable $t$ is called the \textit{parameter} of the curve. The word \textit{interval} is taken in a generalized sense, so that we do not exclude the cases $a = -\infty$, $b = +\infty$.

If we denote by $x'(t)$ the first derivative of $x$ at the point $t$ and use similar notations for the functions $y$, the vector 
\[
\alpha'(t) = (x'(t), y'(t))  \in \mathbb{R}^2
\]
is called the \textit{tangent vector} (or \textit{velocity vector}) of the curve $\alpha$ at $t$. The image set $\alpha(I) \subset \mathbb{R}^2$ is called the \textit{trace} of $\alpha$. 

\begin{definition}
	A parametrized differentiable curve $\alpha: I \longrightarrow \mathbb{R}^2$ is said to be regular if $\alpha'(t)\neq 0$ for all $t \in I$.
\end{definition}

\begin{definition}
	Given $t \in I$, the arc length of a regular parametrized curve $\alpha: I \longrightarrow \mathbb{R}^2$, from the point $t_0 \in I$, is by definition
	\begin{equation}\label{definicao comprimento}
		\mathcal{L}(t) = \int_{t_0}^t |\alpha'(t)|dt.
	\end{equation}
\end{definition}

\begin{definition}
	Let $\alpha: I \longrightarrow \mathbb{R}^2$ be a plane curve $\alpha(t)=(x(t), y(t)),$ the signed curvature of $\alpha$ at $t$ is given by
	\begin{equation}\label{definicao curvatura}
		k(t) = \dfrac{x'(t) y''(t) - x''(t) y'(t)}{[ (x'(t))^2 + (y'(t))^2 ]^{3/2}}.   
	\end{equation}
\end{definition}

\begin{definition}\label{definicao superficie}
	A subset $S \subset \mathbb{R}^3$ is a regular surface if, for each $p \in S$, there exists a neighborhood $V \subset \mathbb{R}^3$ and a map $X: U \longrightarrow V \cap S$ of an open set $U \subset \mathbb{R}^2$ onto $V \cap S \subset \mathbb{R}^3$ such that
	\begin{enumerate}
		\item[a)] $x$ is differentiable. This means that if we write
		$$
		X(u,v) = (x(u,v), y(u,v), z(u,v)), \quad (u,v) \in U,
		$$
		the functions $x(u,v),y(u,v),z(u,v)$ have continuous partial derivatives of all orders in $U$.
		
		\item[b)] $X$ is a homeomorphism. Since $X$ is continuous by condition $1$, this is, $X$ has an inverse $X^{-1}: V\cap S \longrightarrow U$ which is continuous.
		
		\item[c)] For each $q \in U$, the differential map $dX_q: \mathbb{R}^2 \longrightarrow \mathbb{R}^3$ is one-to-one.
		
	\end{enumerate}
	
\end{definition}

The mapping $X$ is called a parametrization or a system of (local) coordinates in (a neighborhood of) p. The neighborhood $V \cap S$ of $p$ is called a coordinate neighborhood. we will see in more detail what item 3 of the definition means. Let's calculate now the matrix of the linear map \( dX_q \) in the canonical bases $\{ e_1=(1,0), e_2=(0,1) \}$ of $\mathbb{R}^2$, with coordinates $(u, v)$, and $\{ f_1 = (1, 0, 0), f_2 = (0, 1, 0), f_3 = (0, 0, 1) \}$ of $\mathbb{R}^3$, with coordinates $(x, y, z)$.

Let $q = (u_0, v_0)$. The vector $e_1$ is tangent to the curve $u \mapsto (u, v_0)$, whose image by $X$ is the curve on the surface $ S $,
$$
u \mapsto (x(u, v_0), y(u, v_0), z(u, v_0)),
$$
called the coordinate curve $ v = v_0 $. The tangent vector of this curve at $X(q)$ is the vector
$$
dX_q(e_1) = \frac{\partial X}{\partial u}(q) =
\left( \frac{\partial x}{\partial u}(q), \frac{\partial y}{\partial u}(q), \frac{\partial z}{\partial u}(q) \right).
$$
Similarly, the tangent vector to the coordinate curve $u = u_0$,
$$
v \mapsto (x(u_0, v), y(u_0, v), z(u_0, v)),
$$
the image by $X$ of the curve $v \mapsto (u_0, v)$, is the vector
$$
dX_q(e_2) = \frac{\partial X}{\partial v}(q) =
\left( \frac{\partial x}{\partial v}(q), \frac{\partial y}{\partial v}(q), \frac{\partial z}{\partial v}(q) \right).
$$
Therefore, the matrix of the linear map $dX_q$ (which we denote with the same notation for simplicity) in the canonical bases of $\mathbb{R}^2$ and $ \mathbb{R}^3 $ is
$$
dX_q =
\begin{pmatrix}
	\frac{\partial x}{\partial u}(q) & \frac{\partial x}{\partial v}(q) \\ ~ \\
	\frac{\partial y}{\partial u}(q) & \frac{\partial y}{\partial v}(q) \\ ~ \\
	\frac{\partial z}{\partial u}(q) & \frac{\partial z}{\partial v}(q)
\end{pmatrix}.
$$
The Condition (3), from Definition 1, tells us that $dX_q: \mathbb{R}^2 \to \mathbb{R}^3$ is one-to-one, which means that the two column vectors of the Jacobian matrix above are linearly independent. In other words, their cross product is non-zero:
\begin{equation}\label{produto vetorial nao nulo}
	\frac{\partial X}{\partial u}(q) \wedge \frac{\partial X}{\partial v}(q) \neq (0,0,0),
\end{equation}
or equivalently, that one of the 2x2 minors of the matrix of $dX_q$, that is, one of the determinants:
$$
\frac{\partial(x, y)}{\partial(u, v)}(q) =
\begin{vmatrix}
	\frac{\partial x}{\partial u}(q) & \frac{\partial x}{\partial v}(q) \\ \\
	\frac{\partial y}{\partial u}(q) & \frac{\partial y}{\partial v}(q)
\end{vmatrix}, 
\quad \quad
\frac{\partial(y, z)}{\partial(u, v)}(q) =
\begin{vmatrix}
	\frac{\partial y}{\partial u}(q) & \frac{\partial y}{\partial v}(q) \\ \\
	\frac{\partial z}{\partial u}(q) & \frac{\partial z}{\partial v}(q)
\end{vmatrix},
$$
or
$$
\frac{\partial(x, z)}{\partial(u, v)}(q) =
\begin{vmatrix}
	\frac{\partial x}{\partial u}(q) & \frac{\partial x}{\partial v}(q) \\ \\
	\frac{\partial z}{\partial u}(q) & \frac{\partial z}{\partial v}(q)
\end{vmatrix},
$$
is different from zero.


This section presents the construction of stereographic projections for the ellipsoid and the elliptic paraboloid. The geometric and analytical procedures involved in obtaining these projections will be explored, highlighting their specific features and the adaptation of the classical stereographic projection method to these surfaces.

\section{Stereographic Projection of the Ellipsoid}\label{secao elipsoide}

Let $\Sigma$ be an ellipsoid centered at the origin of $\mathbb{R}^3$ given by
\begin{equation}\label{elipsoide}
	\Sigma = \left\{ (x,y,z) \in \mathbb{R}^3 \quad ; \quad \dfrac{x^2}{a^2} + \dfrac{y^2}{b^2} + \dfrac{z^2}{c^2} = 1 \right\},
\end{equation}
where \(a, b, c\) are positive real numbers. To perform the stereographic projection of the ellipsoid onto the plane \(z=0\), we begin by considering the equation of the line passing through the North pole of the ellipsoid, \(N(0,0,c)\), and its projection \(Q(u,v,0)\) on the plane \(z=0\).

\begin{figure}[H]
	\centering
	\caption{Ellipsoid stereographic projection equation.}
	\includegraphics[width=10cm]{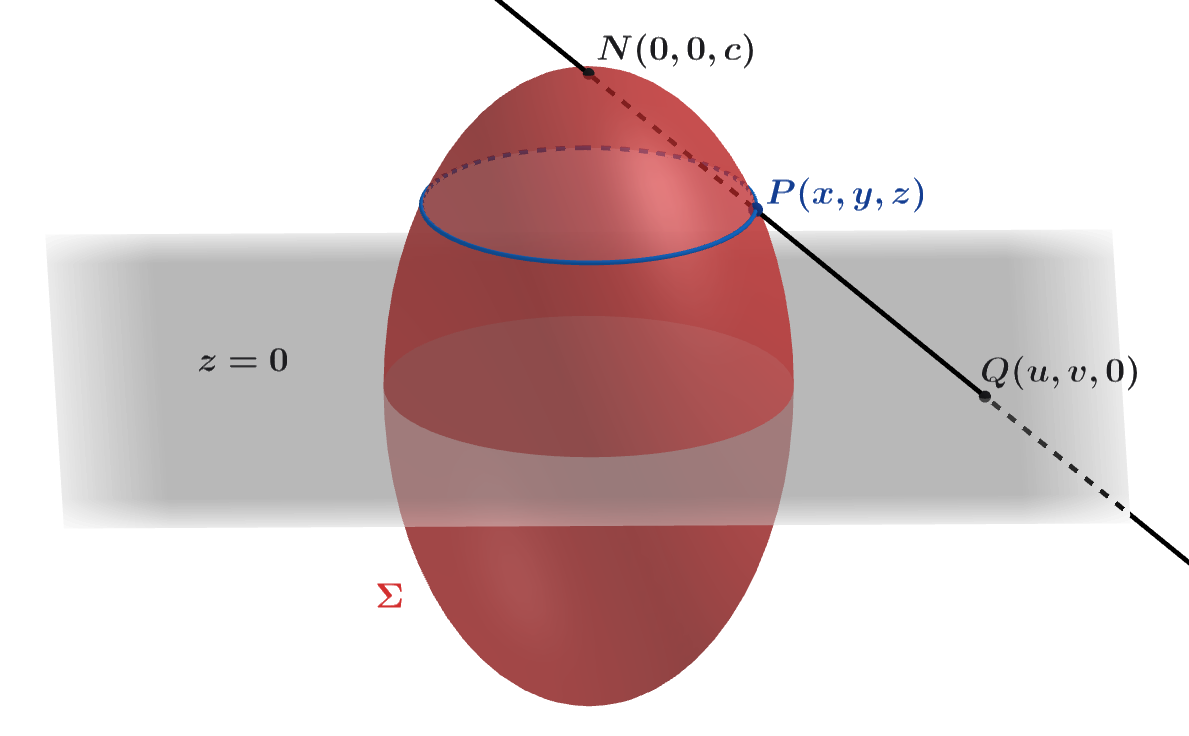}
	\label{FOTO1}\\
	\text{Source: Prepared by the authors.}
\end{figure}
Note that the vector equation of the line passing through the point $N(0,0,c)$ and the projected point on the plane $Q(u,v,0)$ is given by
$$
r: (x,y,z) = (0,0,c) + t(u,v,-c), \quad t \in \mathbb{R},
$$
in the parametric equation, we obtain the equation of the line as
\begin{equation}\label{reta_elipsoide}
	r: 
	\begin{cases}
		x = tu \\
		y = tv \\
		z = (1 - t)c
	\end{cases}, \quad t \in \mathbb{R}.
\end{equation}
Thus, for each point \( (u,v) \in \mathbb{R}^2 \), we can find the intersection point of the sphere with the stereographic projection. Therefore, we may consider the following function:
\begin{equation}\label{parametrizacao_elipsoide}
	\begin{array}{rcl}
		\varphi & : & \mathbb{R}^2 \longrightarrow \Sigma \setminus \{N\} \\
		&  & (u,v) \longmapsto \varphi(u,v) = N + t(Q - N) = (tu, tv, (1 - t)c).
	\end{array}
\end{equation}
In order to find the parameter \( t \) corresponding to the point \( P(x,y,z) \) on the ellipsoid that has been projected onto the plane, we need to determine the intersection between the ellipsoid \eqref{elipsoide} and the line \eqref{reta_elipsoide}. This is done by substituting the coordinates \( x, y, z \) from the line \eqref{reta_elipsoide} into the equation of the ellipsoid \eqref{elipsoide}, we have,
$$
\begin{aligned}
	\dfrac{(ut)^2}{a^2} + \dfrac{(vt)^2}{b^2} + \dfrac{((1-t)c)^2}{c^2} = 1 
	& \Longrightarrow  \dfrac{u^2 t^2}{a^2} + \dfrac{v^2 t^2}{b^2} + \dfrac{(1-t)^2c^2}{c^2} = 1  \\
	& \Longrightarrow \dfrac{u^2 t^2}{a^2} + \dfrac{v^2 t^2}{b^2} + 1 - 2t + t^2 = 1 \\
	& \Longrightarrow b^2 u^2 t^2 + a^2 v^2 t^2 + a^2 b^2 - 2ta^2b^2 + a^2 b^2t^2 = a^2 b^2 \\
	& \Longrightarrow b^2 u^2 t^2 + a^2 v^2 t^2 + a^2 b^2t^2 = 2ta^2b^2 \\
	& \Longrightarrow t^2 ( b^2 u^2 + a^2 v^2 + a^2 b^2 ) = 2ta^2b^2,
\end{aligned}
$$
since \( t \neq 0 \) (because for \( t = 0 \), we obtain the point \( N(0,0,c) \)), we can divide both sides of the equation by \( t \) and isolate the parameter \( t \) corresponding to the point on the ellipsoid, obtaining
\[
t = \dfrac{2a^2b^2}{b^2 u^2 + a^2 v^2 + a^2 b^2}.
\]
Substituting the parameter \( t \) into the equation of the line, we obtain from \eqref{parametrizacao_elipsoide} that
\begin{equation}\label{par_elips}
	\varphi(u,v) = \left(  \dfrac{2a^2 b^2 u}{b^2u^2 + a^2v^2 + a^2 b^2} , \dfrac{2a^2 b^2 v}{b^2u^2 + a^2v^2 + a^2 b^2} , \dfrac{b^2 u^2 + a^2 v^2 - a^2 b^2}{b^2u^2 + a^2v^2 + a^2 b^2} c \right),
\end{equation}
which represents the points on the ellipsoid associated with a fixed point \( (u,v) \in \mathbb{R}^2 \). Note that when $a=b=c=r$ we recover the known parameterization on the sphere \eqref{parametriza esfera}. Now we show that \( \varphi \) is a parametrization of \( \Sigma \setminus \{N\} \). That is, \( \varphi \) must satisfy the following conditions from the definition \ref{definicao superficie}.

\noindent\textbf{(a)} \( \varphi \) is differentiable; In fact, in first to place observe that \( \varphi \) is differentiable since its coordinate functions are differentiable and 
$$
b^2u^2 + a^2v^2 + a^2 b^2 > 0.
$$ 

\noindent\textbf{(c)} The differential map $d\varphi(u,v)$ is one-to-one. In fact, To verify this, from \eqref{produto vetorial nao nulo} simply show that
$$
\frac{\partial\varphi(u,v)}{\partial u} \times \frac{\partial\varphi(u,v)}{\partial v} \neq (0,0,0),
$$
that is, the above vectors are linearly independent. From equation \eqref{par_elips}, and defining
$$
D = D(u,v) = b^2u^2 + a^2v^2 + a^2b^2,
$$
the parametrization becomes
$$
\varphi(u,v) = \left( \frac{2a^2b^2u}{D}, \; \frac{2a^2b^2v}{D}, \; \frac{(b^2u^2 + a^2v^2 - a^2b^2)c}{D} \right).
$$
The partial derivative with respect to $u$ is given by
$$
\frac{\partial \varphi}{\partial u}  = \left( 
\frac{\partial}{\partial u}\left(\frac{2a^2b^2u}{D}\right), 
\frac{\partial}{\partial u}\left(\frac{2a^2b^2v}{D}\right), 
\frac{\partial}{\partial u}\left(\frac{(b^2u^2 + a^2v^2 - a^2b^2)c}{D}\right), 
\right) 
$$
by calculating each component we obtain,
\begin{equation}\label{vetor1}
	\frac{\partial \varphi}{\partial u} = \left( 
	\frac{2a^2b^2(D - 2b^2u^2)}{D^2}, 
	\frac{-4a^2b^4uv}{D^2}, 
	\frac{4a^2b^4uc}{D^2} 
	\right),
\end{equation}
working in a similar way for the derivative with respect to v, we get
$$
\frac{\partial \varphi}{\partial v} = \left( 
\frac{\partial}{\partial v}\left(\frac{2a^2b^2u}{D}\right), 
\frac{\partial}{\partial v}\left(\frac{2a^2b^2v}{D}\right), 
\frac{\partial}{\partial v}\left(\frac{(b^2u^2 + a^2v^2 - a^2b^2)c}{D}\right) 
\right),
$$
in the same way, we get
\begin{equation}\label{vetor2}
	\frac{\partial \varphi}{\partial v} = \left( 
	\frac{-4a^4b^2uv}{D^2}, 
	\frac{2a^2b^2(D - 2a^2v^2)}{D^2}, 
	\frac{4a^4b^2vc}{D^2} 
	\right).
\end{equation}
Now calculating the vector product between \eqref{vetor1} and \eqref{vetor2} we have
$$
\frac{\partial \varphi}{\partial u} \times \frac{\partial \varphi}{\partial v} = \large \begin{vmatrix} 
	\vec{i} & \vec{j} & \vec{k} \\
	\frac{2a^2b^2(D - 2b^2u^2)}{D^2} & \frac{-4a^2b^4uv}{D^2} & \frac{4a^2b^4uc}{D^2} \\
	\frac{-4a^4b^2uv}{D^2} & \frac{2a^2b^2(D - 2a^2v^2)}{D^2} & \frac{4a^4b^2vc}{D^2}
\end{vmatrix}.
$$
Thus, the \( x \)-component is given by
$$
\left( \frac{-4a^2b^4uv}{D^2} \cdot \frac{4a^4b^2vc}{D^2} - \frac{4a^2b^4uc}{D^2} \cdot \frac{2a^2b^2(D - 2a^2v^2)}{D^2} \right)\vec{i} = \frac{-8a^4b^6uc}{D^3}\vec{i},
$$
the \( y \)-component is
$$
\left( \frac{4a^2b^4uc}{D^2} \cdot \frac{-4a^4b^2uv}{D^2} - \frac{2a^2b^2(D - 2b^2u^2)}{D^2} \cdot \frac{4a^4b^2vc}{D^2} \right)\vec{j} = \frac{-8a^6b^4vc}{D^3}\vec{j},
$$
finally, the \( z \)-component is
$$
\left( \frac{2a^2b^2(D - 2b^2u^2)}{D^2} \cdot \frac{2a^2b^2(D - 2a^2v^2)}{D^2} - \frac{-4a^2b^4uv}{D^2} \cdot \frac{-4a^4b^2uv}{D^2} \right)\vec{k} = \frac{4a^4b^4(a^2b^2 - b^2u^2 - a^2v^2)}{D^3}\vec{k}.
$$
Therefore, the final cross product is given by
$$
\frac{\partial \varphi}{\partial u} \times \frac{\partial \varphi}{\partial v} = \frac{4a^4b^4}{D^3} \left( -2b^2uc,\; -2a^2vc,\; a^2b^2 - b^2u^2 - a^2v^2 \right),
$$
observe that this cross product is different from \( (0,0,0) \) for all \( (u,v) \in \mathbb{R}^2 \), since the first and second components are zero if and only if \( u = 0 \) and \( v = 0 \), respectively. However, if \( u = v = 0 \), then the third component is nonzero, since we have \( a^2b^2 > 0 \). Thus, the vectors above are linearly independent, and therefore the differential map \( d\varphi(u,v) \) is one-to-one.

\noindent\textbf{(b)} \( \varphi \) is a homeomorphism: Let us see if we can calculate an inverse for \( \varphi \). Given
\( p = (x,y,z) \in \Sigma \setminus \{N\} \), to determine $ \varphi^{-1}(p) \in \mathbb{R}^2$, we just need to find the intersection point of the line
$$
r: N + t(p - N) = (tx, ty, c + t(z - c)),
$$
with the plane \( z = 0 \), which occurs when
$$
c + t(z - c) = 0 \quad \Longrightarrow \quad t = \frac{c}{c - z}.
$$
Thus, a candidate for the inverse is
\begin{equation}\label{inversa_elipsoide}
	\begin{array}{rcl}
		\varphi^{-1} & : & \Sigma \setminus \{N \} \longrightarrow  \mathbb{R}^2 \\
		&  & \quad (x,y,z) \longmapsto \varphi^{-1}(x,y,z) = \left( \dfrac{cx}{c-z}, \dfrac{cy}{c - z} \right).
	\end{array}
\end{equation}
Let us verify if \( \varphi^{-1} \circ \varphi = Id = \varphi \circ \varphi^{-1} \). Let \( (u,v) \in \mathbb{R}^2 \). Taking 
\[ D(u,v) = b^2u^2 + a^2v^2 + a^2b^2, \] 
and knowing that
\begin{equation}\label{c-z}
	c - z = c - \frac{b^2u^2 + a^2v^2 - a^2b^2}{D}c
	= c \left( 1 - \frac{b^2u^2 + a^2v^2 - a^2b^2}{D} \right)
	= c \cdot \frac{2a^2b^2}{D},
\end{equation}
we have
\begin{align*}
	(\varphi^{-1} \circ \varphi)(u,v) &= \varphi^{-1}(\varphi(u,v)) \\
	&= \varphi^{-1}\left(  \dfrac{2a^2 b^2 u}{D} , \dfrac{2a^2 b^2 v}{D} , \dfrac{b^2 u^2 + a^2 v^2 - a^2 b^2}{D} c \right) \\
	&= \left( \frac{c\frac{2a^2b^2u}{D}}{c\frac{2a^2b^2}{D}}, \frac{c\frac{2a^2b^2v}{D}}{c\frac{2a^2b^2}{D}} \right) = (u,v).
\end{align*}
On the other hand, let's verify the other equality:
\begin{align*}
	(\varphi \circ \varphi^{-1})(x,y,z) &= \varphi(\varphi^{-1}(x,y,z)) = (x,y,z).
\end{align*}
We will calculate each coordinate separately. Note that given \( (x,y,z) \in \Sigma \setminus \{N\} \), we have
\begin{align*}
	(\varphi \circ \varphi^{-1})(x,y,z) &= \varphi(\varphi^{-1}(x,y,z)) = \varphi\left( \frac{cx}{c - z}, \frac{cy}{c - z} \right).
\end{align*}
Recall that
\begin{align*}
	\varphi(u,v) &= \left( \frac{2a^2b^2u}{D}, \frac{2a^2b^2v}{D}, \frac{b^2u^2 + a^2v^2 - a^2b^2}{D} \cdot c \right),
\end{align*}
where \( D = b^2u^2 + a^2v^2 + a^2b^2 \). Let's calculate the value of the first coordinate, \( \vec{i} \). We substitute \( u = \dfrac{cx}{c - z} \) into the first component of \( \varphi(u,v) \)
\begin{align*}
	\frac{2a^2b^2u}{D} &= \frac{2a^2b^2 \cdot \dfrac{cx}{c - z}}{D} = \frac{2a^2b^2cx}{(c - z)D},
\end{align*}
using the relation \( c - z = \dfrac{2a^2b^2}{D}c \) found in \eqref{c-z}, we obtain the \( i \)-coordinate
\begin{align*}
	\frac{2a^2b^2u}{D} &= \frac{2a^2b^2cx}{\dfrac{2a^2b^2}{D}c \cdot D} = \frac{2a^2b^2cx}{2a^2b^2c} = x,
\end{align*}
the coordinate \( \vec{j} \) follows the same steps as for the coordinate \( \vec{i} \), thus obtaining that the coordinate \( \vec{j} \) is \( y \). Let \( p \) denote the third coordinate of \( \varphi \) and let's show that \( p = z \). We know that \( p \) must satisfy
$$
p = \frac{b^2u^2 + a^2v^2 - a^2b^2}{D} \cdot c,
$$
substituting \( u = \dfrac{cx}{c - z} \) and \( v = \dfrac{cy}{c - z} \), we get
$$
p = \frac{b^2\left( \dfrac{cx}{c-z} \right)^2 + a^2\left( \dfrac{cy}{c-z} \right)^2 - a^2b^2 }{D} \cdot c
= \frac{ \dfrac{b^2c^2x^2 + a^2c^2y^2}{(c-z)^2} - a^2b^2 }{D} \cdot c,
$$
from the equation \eqref{c-z}, we know that \( c - z = \dfrac{2a^2b^2}{D}c \), so
$$
(c - z)^2 = \left( \dfrac{2a^2b^2}{D}c \right)^2 = \dfrac{4a^4b^4c^2}{D^2},
$$
substituting this in the denominator:
$$
p = \frac{ \dfrac{b^2c^2x^2 + a^2c^2y^2}{\dfrac{4a^4b^4c^2}{D^2}} - a^2b^2 }{D} \cdot c
=\left( \dfrac{(b^2c^2x^2 + a^2c^2y^2)D^2}{4a^4b^4c^2} - a^2b^2 \right) \cdot \dfrac{c}{D}.
$$
Now, we put everything over a common denominator:
$$
p = \left( \dfrac{(b^2c^2x^2 + a^2c^2y^2)D^2 - 4a^6b^6c^2}{4a^4b^4c^2} \right) \cdot \dfrac{c}{D}
= \dfrac{(b^2c^2x^2 + a^2c^2y^2)D^2 - 4a^6b^6c^2}{4a^4b^4c D}.
$$
Since \( (x,y,z) \in E \setminus \{N\} \), the equation \( \dfrac{x^2}{a^2} + \dfrac{y^2}{b^2} + \dfrac{z^2}{c^2} = 1 \) is satisfied, so
$$
b^2c^2x^2 + a^2c^2y^2 = a^2b^2(c^2 - z^2),
$$
substituting this in the numerator, we get
$$
p = \dfrac{a^2b^2(c^2 - z^2)D^2 - 4a^6b^6c^2}{4a^4b^4c D}
= \dfrac{a^2b^2 \left[ (c^2 - z^2)D^2 - 4a^4b^4c^2 \right]}{4a^4b^4c D}= \dfrac{(c^2 - z^2)D^2 - 4a^4b^4c^2}{4a^2b^2c D},
$$
from the equation \eqref{c-z}, for \( c - z = \dfrac{2a^2b^2}{D}c \), isolating \( D \), we get
$$
D = \dfrac{2a^2b^2c}{c - z},
$$
substituting \( D \) in the last expression for \( p \):
$$
p = \dfrac{(c^2 - z^2)\left( \dfrac{2a^2b^2c}{c - z} \right)^2 - 4a^4b^4c^2}{4a^2b^2c \cdot \dfrac{2a^2b^2c}{c - z}}
= \dfrac{(c^2 - z^2) \cdot \dfrac{4a^4b^4c^2}{(c - z)^2} - 4a^4b^4c^2}{\dfrac{8a^4b^4c^2}{c - z}}.
$$
Now, we put everything over a common denominator \( (c - z)^2 \):
$$
p = \dfrac{ \dfrac{4a^4b^4c^2(c^2 - z^2) - 4a^4b^4c^2(c - z)^2}{(c - z)^2} }{ \dfrac{8a^4b^4c^2}{c - z}}
= \dfrac{4a^4b^4c^2 \left[ (c^2 - z^2) - (c - z)^2 \right]}{8a^4b^4c^2 (c - z)},
$$
now, canceling \( 4a^4b^4c^2 \), we get
$$
p = \dfrac{(c^2 - z^2) - (c^2 - 2cz + z^2)}{2(c - z)},
$$
consequently,
$$
p = \dfrac{c^2 - z^2 - c^2 + 2cz - z^2}{2(c - z)} = \dfrac{-2z^2 + 2cz}{2(c - z)} = \dfrac{-2z(z - c)}{2(c - z)} = \dfrac{2z(c - z)}{2(c - z)} = z.
$$

In this way, the stereographic projection map of the ellipsoid \eqref{parametrizacao_elipsoide} associates all points of the ellipsoid to a point in the \( xy \)-plane. As done for the sphere, if one wishes to cover the entire ellipsoid with parametrizations of this type, it is sufficient to perform a completely analogous construction, now considering the map defined on $ \Sigma \setminus S$, where \( S = (0, 0, -c) \). From this, it becomes clear that the entire ellipsoid can be covered using these two parametrizations and from the definition \ref{definicao superficie}, we can conclude that $\Sigma$ is a surface. However, our focus here is to study the interesting properties that this projection map provides. Let us now examine how the construction works in the case of the elliptic paraboloid. 

We state in advance that the entire construction is quite analogous to what we did for the ellipsoid therefore, we shall proceed more briefly.

\section{Stereographic Projection of the Elliptical Paraboloid}\label{secao paraboloide}

Let \( \mathcal{P} \subset \mathbb{R}^3 \) be the elliptical paraboloid with vertex $N$ given by
\begin{equation} \label{paraboloide}
	\mathcal{P} = \left\{ (x,y,z) \in \mathbb{R}^3 \quad ; \quad z = c - \dfrac{x^2}{a^2} - \dfrac{y^2}{b^2}  \right\},  
\end{equation}
where \( a,b,c \in \mathbb{R} \) are positive constants.

To perform the stereographic projection of the elliptic paraboloid onto the \( xy \)-plane (\( z = 0 \)), we start by considering the equation of the line passing through the North pole of the paraboloid, that is, the point \( N(0,0,c) \), and its projection \( Q(u,v,0) \) on the plane \( z=0 \). The vector equation of the line passing through \( N(0,0,c) \) and the point \( Q(u,v,0) \) is given by
\begin{equation}\label{reta_paraboloide}
	r: 
	\begin{cases}
		x = tu \\
		y = tv \\
		z = (1 - t)c
	\end{cases}, \quad t \in \mathbb{R}.
\end{equation}

\begin{figure}[H]
	\centering
	\caption{Elliptic Paraboloid stereographic projection equation.}
	\includegraphics[width=11cm]{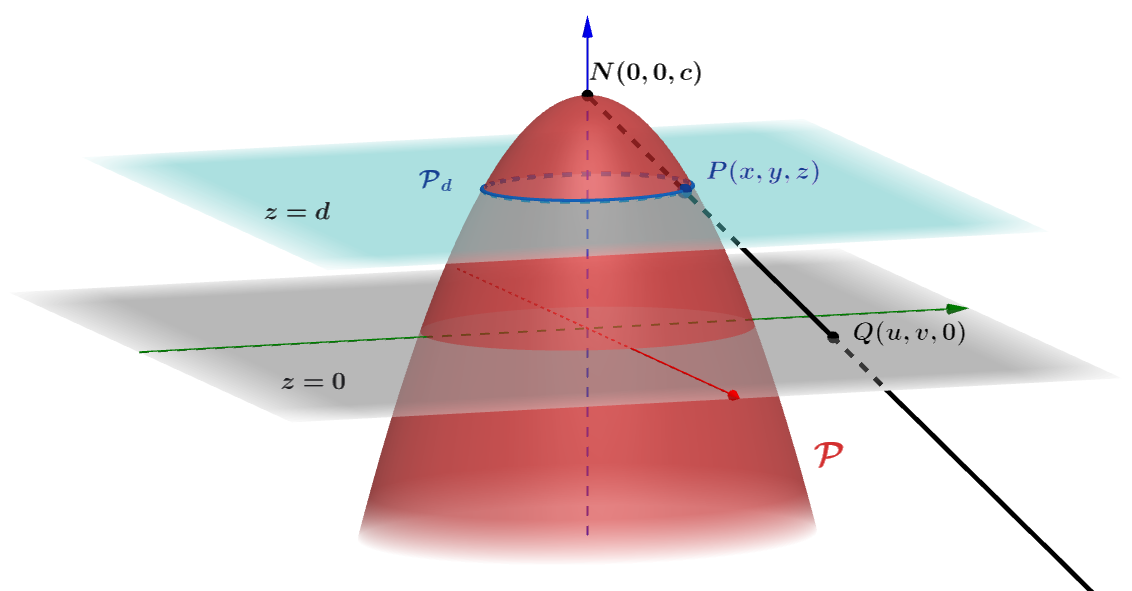}
	\label{FOTO2}\\
	\text{Source: Prepared by the authors.}
\end{figure}

Thus, for each point \( (u,v) \in \mathbb{R}^2 \), we can find the intersection point of the paraboloid with the stereographic projection. We then define the following map:
\begin{equation}\label{parametrizacao_paraboloide}
	\begin{array}{rcl}
		\phi & : & \mathbb{R}^2-\{(0,0)\} \longrightarrow \mathcal{P} \setminus \{N \} \\
		&  & \quad \quad (u,v) \quad \longmapsto ~~ \phi(u,v) = N + t(Q-N) = (tu, tv, (1-t)c).
	\end{array}
\end{equation}
To find the parameter \( t \) for the point \( P(x,y,z) \) on the paraboloid projected from the plane, we substitute the coordinates \( x, y, z \) from the line into the paraboloid equation \eqref{paraboloide}, that is,
$$
\begin{aligned}
	(1-t)c = c - \dfrac{(ut)^2}{a^2} - \dfrac{(vt)^2}{b^2}  &\Longrightarrow  -ct = - \dfrac{u^2 t^2}{a^2} - \dfrac{v^2 t^2}{b^2} 
	\\ &\Longrightarrow  ct = t^2 \left(  \dfrac{u^2}{a^2} + \dfrac{v^2}{b^2} \right),
\end{aligned}
$$
since \( t \neq 0 \) (as \( t = 0 \) yields the North pole \( N(0,0,c) \)), we can divide both sides by \( t \) and isolate, we get
$$
t = \dfrac{c}{\dfrac{u^2}{a^2} + \dfrac{v^2}{b^2}} = \dfrac{ca^2b^2}{b^2 u^2 + a^2v^2},
$$
substituting this value of \( t \) into \eqref{parametrizacao_paraboloide}, we obtain
\begin{equation}\label{phi}
	\phi(u,v) = \left( \dfrac{a^2b^2cu}{b^2u^2+a^2v^2}, \dfrac{a^2b^2cv}{b^2u^2+a^2v^2}, \dfrac{(b^2u^2+a^2v^2 - a^2b^2c)c}{b^2u^2+a^2v^2}\right).
\end{equation}
We now verify that \( \phi \) is a parametrization of \( \mathcal{P} \setminus \{N\} \) that satisfies the conditions of a regular surface. 

\noindent\textbf{(a)} In first to place, note that \( \phi \) is differentiable since \( (u,v) \neq (0,0) \) and its component functions are differentiable.

\noindent\textbf{(c)} We will now show that the differential $d\phi(u,v)$ is one-to-one. To do this, we begin by defining
$$
M(u,v) = b^2u^2 + a^2v^2.
$$

\noindent Rewriting $\phi$, we have
\[
\phi(u,v) = \left( \frac{a^2b^2cu}{M}, \frac{a^2b^2cv}{M}, \frac{(M - a^2b^2c)c}{M} \right),
\]
differentiating $\phi$ with respect to $u$, we obtain
\begin{equation}\label{derivada_u_en}
	\frac{\partial \phi}{\partial u} = \left( \frac{a^2b^2c(M - 2b^2u^2)}{M^2}, \frac{-2a^2b^4cuv}{M^2}, \frac{2a^2b^4c^2u}{M^2} \right),
\end{equation}

\noindent and differentiating with respect to $v$, we get
\begin{equation}\label{derivada_v_en}
	\frac{\partial \phi}{\partial v} = \left( \frac{-2a^4b^2cuv}{M^2}, \frac{a^2b^2c(M - 2a^2v^2)}{M^2}, \frac{2a^4b^2c^2v}{M^2} \right).
\end{equation}

\noindent Let us analyze the third coordinate of the cross product between \eqref{derivada_u_en} and \eqref{derivada_v_en}:
$$
\frac{a^2b^2c(M - 2b^2u^2)}{M^2} \cdot \frac{a^2b^2c(M - 2a^2v^2)}{M^2} 
- \frac{-2a^2b^4cuv}{M^2} \cdot \frac{-2a^4b^2cuv}{M^2} 
= -\frac{a^4b^4c^2}{M^2},
$$
note that this third coordinate is strictly negative. Therefore, the cross product between \eqref{derivada_u_en} and \eqref{derivada_v_en} is non-zero:
$$
\frac{\partial \phi}{\partial u} \times \frac{\partial \phi}{\partial v} \neq (0,0,0),
$$
hence, the differential map $d\phi(u,v)$ is one-to-one.

\noindent\textbf{(b)} We now show that $\phi$ is a homeomorphism. Since $\phi$ is differentiable, it is continuous. Analogously to the ellipsoid case, by finding the point of intersection between the line  
$$
r: N + t(p - N) = (tx, ty, c + t(z - c)),
$$
and the plane $z = 0$, where $p = (x, y, z) \in \mathcal{P} \setminus \{N\}$, we obtain the parameter $t$ that leads to the following candidate for the inverse function of $\phi$ is

\begin{equation}\label{inversa_paraboloide}
	\begin{array}{rcl}
		\phi^{-1} & : & \mathcal{P} \setminus \{N \} \longrightarrow  \mathbb{R}^2 \setminus \{(0,0 \} \\
		&  & \quad (x,y,z) \longmapsto \phi^{-1}(x,y,z) = \left( \dfrac{cx}{c-z}, \dfrac{cy}{c - z} \right).
	\end{array}
\end{equation}

\noindent Let $(u,v) \in \mathbb{R}^2 \setminus \{(0,0)\}$ thus, we obtain
\begin{align*}
	(\phi^{-1} \circ \phi)(u,v) &= \phi^{-1}(\phi(u,v)) \\
	&= \phi^{-1} \left( \frac{a^2b^2cu}{M}, \frac{a^2b^2cv}{M}, \frac{(M - a^2b^2c)c}{M} \right) \\
	&= \left( \frac{\dfrac{a^2b^2c^2u}{M}}{c - \dfrac{(M - a^2b^2c)c}{M}}, \frac{\dfrac{a^2b^2c^2v}{M}}{c - \dfrac{(M - a^2b^2c)c}{M}} \right) \\
	&= \left( \frac{\dfrac{a^2b^2c^2u}{M}}{\dfrac{a^2b^2c^2}{M}}, \frac{\dfrac{a^2b^2c^2v}{M}}{\dfrac{a^2b^2c^2}{M}} \right) = (u, v),
\end{align*}
\noindent on the orther hand, let $(x,y,z) \in \mathcal{P} \setminus \{N\}$ then, we have
\begin{align*}
	z = c - \frac{x^2}{a^2} - \frac{y^2}{b^2} 
	&\Longleftrightarrow c - z = \frac{x^2}{a^2} + \frac{y^2}{b^2} \Longleftrightarrow a^2b^2(c - z) = b^2x^2 + a^2y^2,
\end{align*}
where $0 \leq z < c$. Therefore, we get
\begin{align}\label{conta_phi}
	(\phi \circ \phi^{-1} )(x,y,z) &= \phi(\phi^{-1}(x,y,z)) \nonumber \\
	&= \phi \left( \frac{cx}{c - z}, \frac{cy}{c - z} \right) \\
	&= \left( \frac{a^2b^2c \cdot \dfrac{cx}{c - z}}{M(x,y,z)}, \frac{a^2b^2c \cdot \dfrac{cy}{c - z}}{M(x,y,z)}, \frac{(M(x,y,z) - a^2b^2c)c}{M(x,y,z)} \right), \nonumber
\end{align}
where
\begin{align*}
	M(x,y,z) &= a^2\left( \frac{cy}{c - z} \right)^2 + b^2\left( \frac{cx}{c - z} \right)^2 \\
	&= \frac{a^2c^2y^2 + b^2c^2x^2}{(c - z)^2} \\
	&= \frac{(b^2x^2 + a^2y^2)c^2}{(c - z)^2},
\end{align*}
since $(x, y, z) \in \mathcal{P} \setminus \{N\}$, with $0 \leq z < c$, we obtain
\[
M(x,y,z) = \frac{a^2b^2(c - z)c^2}{(c - z)^2} = \frac{a^2b^2c^2}{c - z}.
\]
\noindent Thus, returning to the expression \eqref{conta_phi}, we conclude
\begin{align*}
	\phi(\phi^{-1}(x,y,z)) 
	&= \left( \frac{ \dfrac{a^2b^2c^2x}{c - z} }{ \dfrac{a^2b^2c^2}{c - z} }, \frac{ \dfrac{a^2b^2c^2y}{c - z} }{ \dfrac{a^2b^2c^2}{c - z} }, \frac{ \left( \dfrac{a^2b^2c^2}{c - z} - a^2b^2c \right) c }{ \dfrac{a^2b^2c^2}{c - z} } \right) \\
	&= \left( x, y, \frac{ \left( \dfrac{a^2b^2c^2 - a^2b^2c^2 + a^2b^2cz}{c - z} \right) c }{ \dfrac{a^2b^2c^2}{c - z} } \right) \\
	&= \left( x, y, \frac{a^2b^2c^2z}{a^2b^2c^2} \right) = (x, y, z).
\end{align*}
\noindent Therefore, $\phi^{-1} \circ \phi = \phi \circ \phi^{-1} = \mathrm{Id}$, that is, $\phi^{-1}$ is indeed the inverse of $\phi$. Since $z < c$, we see that the component functions of $\phi^{-1}$ are continuous, and thus $\phi^{-1}$ is continuous. Consequently, $\phi$ is a homeomorphism. Analogously to the ellipsoid case, our goal from this point on is to analyze the interesting properties that the projection map provides in the case of the elliptic paraboloid.

\section{Results and Applications}\label{section4}

At this moment, we discuss some applications involving projections of the ellipsoid and the paraboloid. We show that such projections preserve the eccentricities of the conic sections, which allows us to analyze relevant geometric properties. Furthermore, we establish a relationship between the curvatures of the horizontal sections of these surfaces and the curvatures of their respective projections onto the plane \( z = 0 \).

\vspace{0.5cm}

\begin{center}
	\centering
	\footnotesize
	\text{5.1. RELATIONSHIP BETWEEN THE ECCENTRICITIES}
\end{center}

The eccentricity $e$ of an ellipse is given by $e = {c}/{a},$ where $c$ is the distance from the center to the focus and $a$ is the length of the semi-major axis. The value of $c$ can be computed using $c^2 = a^2 - b^2$, where $b$ is the length of the semi-minor axis. Next, we analyze the behavior of ellipses obtained in the plane $\mathbb{R}^2$ through stereographic projection. Considering the ellipsoid given in \eqref{elipsoide} and the horizontal plane $z = d$, with $d < c$, the intersection is described by:
\begin{equation*}
	\mathcal{E}_d: 
	\begin{cases}
		\dfrac{x^2}{a^2} + \dfrac{y^2}{b^2} + \dfrac{z^2}{c^2} = 1 \\
		z = d
	\end{cases}, \quad d < c.
\end{equation*}

\begin{figure}[H]
	\centering
	\caption{Intersection of the ellipsoid with a plane.}
	\includegraphics[width=11cm]{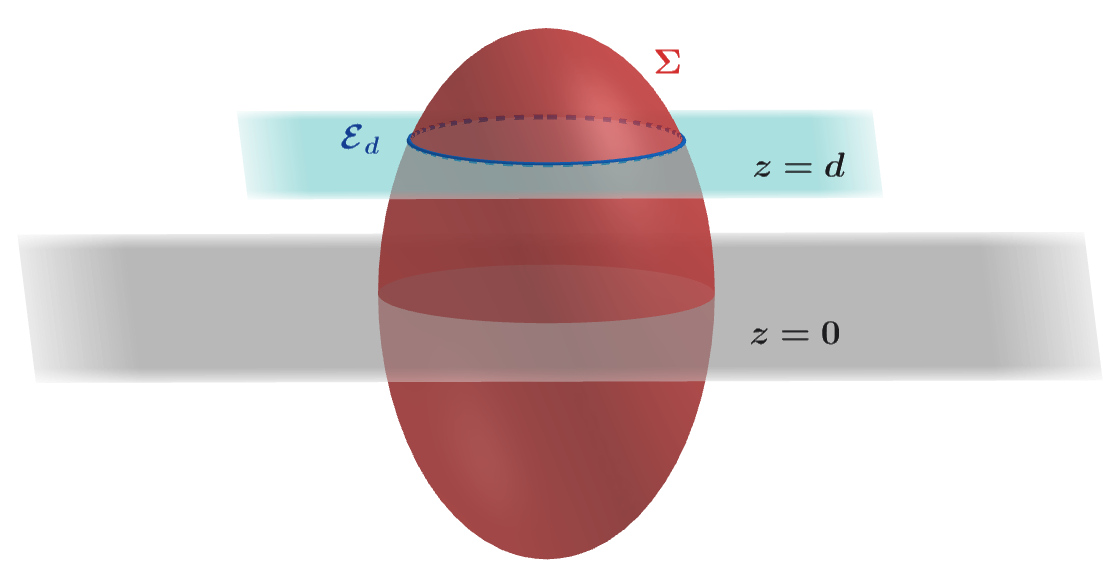}
	\label{FOTO3}\\
	\text{Source: Prepared by the authors.}
\end{figure}

\noindent This intersection yields an ellipse with the equation:
\begin{equation}\label{ellipse_intersection}
	\mathcal{E}_d: \quad \dfrac{x^2}{A^2} + \dfrac{y^2}{B^2} = 1,
\end{equation}
where the values of \( A \) and \( B \) are given by:
\begin{equation}\label{A e B}
	A = \left( \dfrac{a\sqrt{c^2 - d^2}}{c} \right) \quad \text{and} \quad B = \left( \dfrac{b\sqrt{c^2 - d^2}}{c} \right).
\end{equation}
Let \( \mathcal{E}_0 = \varphi^{-1}(\mathcal{E}_d) \) be the ellipse obtained by projecting the ellipse \( \mathcal{E}_d \) onto the plane \( z = 0 \) using the mapping given in \eqref{inversa_elipsoide}. Then, we obtain the following result.

\begin{figure}[H]
	\centering
	\caption{Projection of the ellipse $\mathcal{E}_d$ onto the $xy$-plane.}
	\includegraphics[width=13cm]{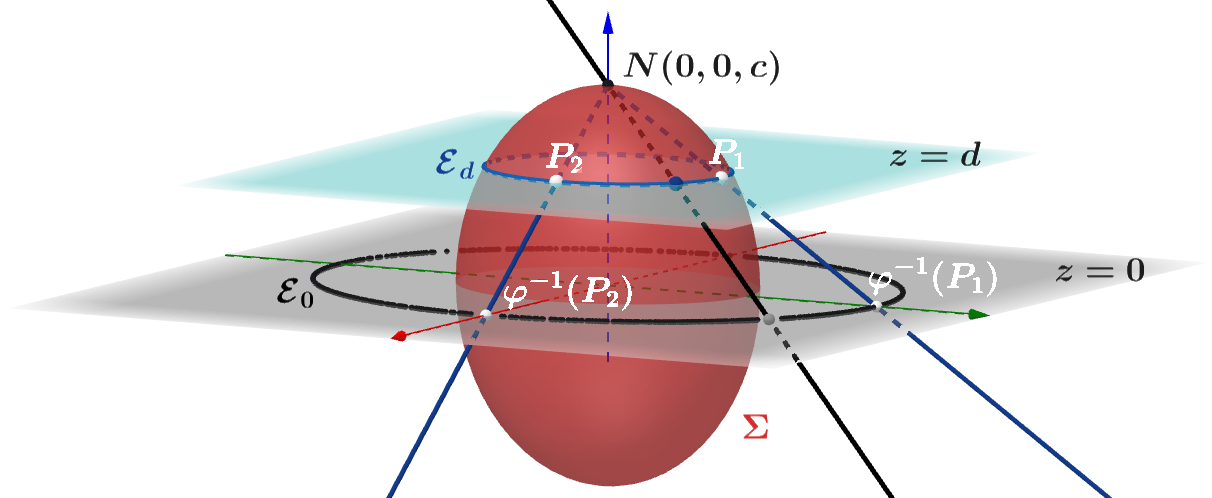}
	\label{FOTO4}\\
	\text{Source: Prepared by the authors.}
\end{figure}

\begin{theorem}\label{teorema1}
	The projection map \eqref{inversa_elipsoide} preserves eccentricity, that is, 
	$$
	e(\mathcal{E}_0) = e ( \mathcal{E}_d),
	$$
	where \( e(\mathcal{E}_d) \) and \( e(\mathcal{E}_0) \) denote the eccentricities of the ellipses \( \mathcal{E}_d \) and \( \mathcal{E}_0 \), respectively.
	
\end{theorem}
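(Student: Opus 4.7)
The plan is to observe that on the slice $z=d$ the stereographic projection $\varphi^{-1}$ acts as a uniform planar scaling, so it maps ellipses to similar ellipses, and similar ellipses have the same eccentricity. Since similarity preserves the ratio of the semi-axes, and the eccentricity of an ellipse is a function of that ratio alone, the conclusion will follow immediately.

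Concretely, I would first restrict $\varphi^{-1}$ to the plane $z=d$. From \eqref{inversa_elipsoide}, every point $(x,y,d)\in\mathcal{E}_d$ is sent to
\[
\varphi^{-1}(x,y,d)=\left(\frac{c}{c-d}\,x,\;\frac{c}{c-d}\,y\right),
\]
which is simply multiplication of the planar coordinates by the positive constant $\lambda:=c/(c-d)$ (well defined since $d<c$). Substituting $(x,y)=(u/\lambda,v/\lambda)$ into \eqref{ellipse_intersection} and using \eqref{A e B}, I obtain the equation of $\mathcal{E}_0$ in the form
\[
\frac{u^{2}}{(\lambda A)^{2}}+\frac{v^{2}}{(\lambda B)^{2}}=1,
\]
so $\mathcal{E}_0$ is an ellipse with semi-axes $A'=\lambda A$ and $B'=\lambda B$.

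Next I would compute the eccentricities directly. Assuming without loss of generality that $a\geq b$ (so that $A\geq B$ and $A'\geq B'$), the standard formula gives
\[
e(\mathcal{E}_d)=\sqrt{1-\frac{B^{2}}{A^{2}}}=\sqrt{1-\frac{b^{2}}{a^{2}}},\qquad
e(\mathcal{E}_0)=\sqrt{1-\frac{(B')^{2}}{(A')^{2}}}=\sqrt{1-\frac{\lambda^{2}B^{2}}{\lambda^{2}A^{2}}}=\sqrt{1-\frac{b^{2}}{a^{2}}},
\]
where in the last step I used that the common scaling factor $\lambda$ cancels. Hence $e(\mathcal{E}_0)=e(\mathcal{E}_d)$, and the symmetric case $b\geq a$ is identical after swapping roles.

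There is no real obstacle here: the only thing to be slightly careful about is verifying that the restriction of $\varphi^{-1}$ to a horizontal slice is a genuine homothety (which is why $B/A$ is preserved), and correctly identifying from \eqref{A e B} that $B/A=b/a$ is independent of $d$; this is what makes the eccentricity depend only on $a$ and $b$, not on the height $d$ of the slice.
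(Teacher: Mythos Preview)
Your proof is correct and follows essentially the same approach as the paper: both determine the semi-axes of $\mathcal{E}_0$ and observe that the common factor cancels in the eccentricity, yielding $\sqrt{1-b^2/a^2}=\sqrt{a^2-b^2}/a$ for both ellipses. Your version is slightly more streamlined in that you identify upfront that $\varphi^{-1}$ restricted to the slice $z=d$ is the homothety $(x,y)\mapsto \lambda(x,y)$ with $\lambda=c/(c-d)$, which also cleanly justifies that $\mathcal{E}_0$ is an ellipse---a point the paper only asserts after projecting the axis endpoints.
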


\begin{proof}
	
	We need to compute the eccentricity of the $\mathcal{E}_d $ and $ \mathcal{E}_0$. To do so, we assume without loss of generality that $A > B$, that is, $a > b$. Thus, the major axis is $2A$ and the minor axis is $2B$, and let $C$ represent half the distance between the focus. From the geometric definition of the ellipse, we have the relation:
	
	\begin{equation}\label{valor de C}
		C^2 = A^2 - B^2 = \dfrac{a^2(c^2 - d^2)}{c^2} - \dfrac{b^2(c^2 - d^2)}{c^2} = \dfrac{(a^2 - b^2)(c^2 - d^2)}{c^2},
	\end{equation}
	thus, the definition of eccentricity and \eqref{valor de C}, we obtain
	\begin{equation}\label{exec E_d}
		e(\mathcal{E}_d) = \dfrac{C}{A} = \dfrac{ \sqrt{\dfrac{(a^2 - b^2)(c^2 - d^2)}{c^2}} }{\dfrac{a\sqrt{(c^2-d^2)}}{c}} = \dfrac{\sqrt{a^2 - b^2}}{a}.
	\end{equation}
	Now, let us determine the eccentricity of the ellipse projected onto the plane by the inverse of the parameterization. For this, we need to obtain the equation of the projection of the ellipse generated by the intersection of the plane \( z = d \) with the ellipsoid.
	Let \( (x, 0, d) \in \Sigma \), then
	$$
	\begin{aligned}
		\dfrac{x^2}{a^2} + \dfrac{0^2}{b^2} + \dfrac{d^2}{c^2} = 1
		& \Longrightarrow  x = \dfrac{a}{c} \sqrt{c^2 - d^2}. 
	\end{aligned}
	$$
	Thus, \( (x, 0, d) = \left( \frac{a}{c} \sqrt{c^2 - d^2},\, 0,\, d \right) \in \Sigma \). Similarly, given \( (0, y, d) \in \Sigma \), we obtain \( y = \frac{b}{c} \sqrt{c^2 - d^2} \). Let us now analyze the projections of the points \( P_1\left( \frac{a}{c} \sqrt{c^2 - d^2},\, 0,\, d \right) \) and \( P_2\left( 0,\, \frac{b}{c} \sqrt{c^2 - d^2},\, d \right) \) under the inverse mapping \eqref{inversa_elipsoide}.In this way, we have
	\begin{align*}
		\varphi^{-1}(P_1)&= \varphi^{-1}\left( \dfrac{a}{c} \sqrt{c^2 - d^2},\, 0,\, d \right)\\
		&= \left( \dfrac{c \left( \dfrac{a}{c} \sqrt{c^2 - d^2} \right)}{c - d},\, \dfrac{c \cdot 0}{c - d} \right)\\
		&= \left( \dfrac{a \sqrt{c^2 - d^2}}{c - d},\, 0 \right)
	\end{align*}
	and
	\begin{align*}
		\varphi^{-1}(P_2)&= \varphi^{-1}\left( 0,\, \dfrac{b}{c} \sqrt{c^2 - d^2},\, d \right)\\
		&= \left( \dfrac{c \cdot 0}{c - d},\, \dfrac{c \left( \dfrac{b}{c} \sqrt{c^2 - d^2} \right)}{c - d} \right)\\
		&= \left( 0,\, \dfrac{b \sqrt{c^2 - d^2}}{c - d} \right).
	\end{align*}
	From these points, we obtain the ellipse in the \( xy \)-plane given by the equation
	\begin{equation}\label{elipsenoplanoxy}
		\mathcal{E}_0 :   \dfrac{x^2}{(A_0)^2} + \dfrac{y^2}{(B_0)^2} = 1.
	\end{equation}
	where, 
	\begin{equation}\label{A_0 e B_0}
		A_0 = \dfrac{a \sqrt{c^2 - d^2}}{c - d} \quad \text{and} \quad B_0 = \dfrac{b \sqrt{c^2 - d^2}}{c - d}.
	\end{equation}
	It is not difficult to see that the points projected by \( \varphi^{-1} \), obtained from the intersection of the ellipsoid with the plane \( z = d \), belong to \eqref{elipsenoplanoxy}. We now investigate the eccentricity of this new ellipse generated by the projection of the ellipse \eqref{ellipse_intersection}. Note that according to our initial assumption we have that $A_0 > B_0$ (because $a > b$) and consider \( C_0 \) to be half the distance between the focus. Then,
	
	\begin{equation}\label{valor de C'}
		(C_0)^2 = (A_0)^2 - (B_0)^2 = \dfrac{a^2(c^2 - d^2)}{(c - d)^2} - \dfrac{b^2(c^2 - d^2)}{(c - d)^2} = \dfrac{(a^2 - b^2)(c^2 - d^2)}{(c - d)^2}.
	\end{equation}
	Thus, the eccentricity of the $\mathcal{E}_0$ is given by 
	\begin{equation}\label{exce E_0}
		e(\mathcal{E}_0) = \dfrac{C_0}{A_0} = \dfrac{ \sqrt{\dfrac{(a^2 - b^2)(c^2 - d^2)}{(c - d)^2}} }{\dfrac{a \sqrt{(c^2 - d^2)}}{c - d}} = \dfrac{\sqrt{a^2 - b^2}}{a}.
	\end{equation}
	Therefore, from \eqref{exce E_0} and \eqref{exec E_d} follow the result.
	
\end{proof}

For the case of the elliptical paraboloid, we will consider the calculations completely analogous to the case of the ellipsoid with the elements defined in the image below. Considering the elliptical paraboloid given in \eqref{paraboloide} and the horizontal plane \( z = d \), with \( d < c \), the intersection is described by:
\begin{equation*}
	\mathcal{P}_d: 
	\begin{cases}
		z = c - \dfrac{x^2}{a^2} - \dfrac{y^2}{b^2} \\
		z = d
	\end{cases}, \quad d < c.
\end{equation*}

\begin{figure}[H]
	\centering
	\caption{Projection of the ellipse $\mathcal{P}_d$ onto the $xy$-plane.}
	\includegraphics[width=11cm]{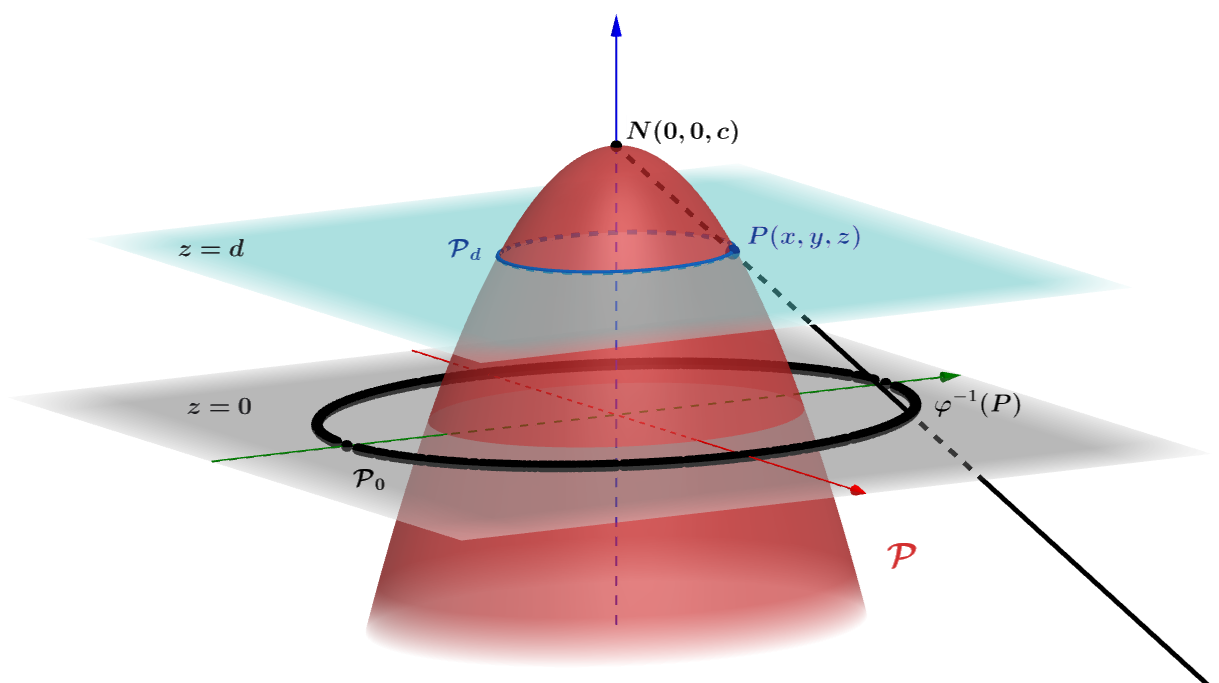}
	\label{FOTO5}\\
	\text{Source: Prepared by the authors.}
\end{figure}

This intersection yields an ellipse with the equation:
\begin{equation}\label{paraboloide_intersection}
	\mathcal{P}_d: \quad \dfrac{x^2}{A^2} + \dfrac{y^2}{B^2} = 1,
\end{equation}
where the values of \( A \) and \( B \) are given by:
\begin{equation}\label{A e B paraboloide}
	A = a\sqrt{c - d}  \quad \text{and} \quad B =  b\sqrt{c - d}.
\end{equation}
Let \( \mathcal{P}_0 = \phi^{-1}(\mathcal{P}_d) \) be the ellipse obtained by projecting the ellipse \( \mathcal{P}_d \) onto the plane \( z = 0 \) using the mapping given in \eqref{inversa_paraboloide}. Then, we obtain the following result.

\begin{theorem}\label{teorema2}
	The projection map \eqref{inversa_paraboloide} preserves eccentricity, that is, 
	$$
	e(\mathcal{P}_0) = e ( \mathcal{P}_d),
	$$
	where \( e(\mathcal{P}_d) \) and \( e(\mathcal{P}_0) \) denote the eccentricities of the ellipses \( \mathcal{P}_d \) and \( \mathcal{P}_0 \), respectively.
	
\end{theorem}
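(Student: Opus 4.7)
The plan is to mirror, essentially verbatim, the strategy of the proof of Theorem \ref{teorema1}, exploiting the decisive fact that on the horizontal slice $\mathcal{P}_d$ the third coordinate is constant and equal to $d$, which makes the restriction of $\phi^{-1}$ to this slice particularly transparent.

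First, I would assume without loss of generality that $a > b$, so that in \eqref{A e B paraboloide} we have $A > B$. Letting $C$ denote half the focal distance of $\mathcal{P}_d$, the identity $C^2 = A^2 - B^2$ together with \eqref{A e B paraboloide} gives $C^2 = (a^2 - b^2)(c-d)$, and hence
\begin{equation*}
e(\mathcal{P}_d) = \frac{C}{A} = \frac{\sqrt{(a^2-b^2)(c-d)}}{a\sqrt{c-d}} = \frac{\sqrt{a^2 - b^2}}{a}.
\end{equation*}

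Next, I would determine $\mathcal{P}_0 = \phi^{-1}(\mathcal{P}_d)$. The key observation is that, since $z = d$ is constant on $\mathcal{P}_d$, formula \eqref{inversa_paraboloide} restricted to this slice reduces to the planar homothety $(x,y,d) \mapsto \lambda(x,y)$, with constant ratio $\lambda = c/(c-d) > 0$. Consequently $\mathcal{P}_0$ is the ellipse whose semi-axes are $A_0 = \lambda A = ac/\sqrt{c-d}$ and $B_0 = \lambda B = bc/\sqrt{c-d}$. Equivalently, and to stay closer to the template of Theorem \ref{teorema1}, I could exhibit these values by projecting the two extremal points $P_1 = \bigl(a\sqrt{c-d},\,0,\,d\bigr)$ and $P_2 = \bigl(0,\,b\sqrt{c-d},\,d\bigr)$ of the axes of $\mathcal{P}_d$ under \eqref{inversa_paraboloide} and reading off the semi-axes from the images.

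The conclusion is then immediate: setting $C_0^2 = A_0^2 - B_0^2 = (a^2-b^2)c^2/(c-d)$, the eccentricity of $\mathcal{P}_0$ is
\begin{equation*}
e(\mathcal{P}_0) = \frac{C_0}{A_0} = \frac{c\sqrt{a^2-b^2}/\sqrt{c-d}}{ac/\sqrt{c-d}} = \frac{\sqrt{a^2-b^2}}{a} = e(\mathcal{P}_d),
\end{equation*}
which proves the theorem. There is no real obstacle in the argument, and indeed no hard computation at all: because $\phi^{-1}$ acts on each horizontal slice as a homothety, and homotheties rescale both semi-axes by the same factor, the ratio $\sqrt{A^2-B^2}/A$ that defines eccentricity is automatically preserved. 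This is the same structural reason that made Theorem \ref{teorema1} work, simply applied to the paraboloid parametrization \eqref{phi} instead of the ellipsoid one.
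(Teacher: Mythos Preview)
Your proof is correct and follows essentially the same route as the paper: you compute $e(\mathcal{P}_d)=\sqrt{a^2-b^2}/a$ from \eqref{A e B paraboloide}, identify the projected ellipse with semi-axes $A_0=ac/\sqrt{c-d}$ and $B_0=bc/\sqrt{c-d}$ (exactly the values in \eqref{A_0 e B_0 paraboloide}), and obtain the same eccentricity. Your explicit remark that $\phi^{-1}$ acts on the slice $z=d$ as a homothety of ratio $c/(c-d)$ is a clean conceptual addition that the paper leaves implicit, but the underlying argument is the same.
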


\begin{proof}
	
	We need to compute the eccentricity of the $\mathcal{P}_d $ and $ \mathcal{P}_0$. To do so, we assume without loss of generality that $A > B$, that is, $a > b$. Thus, the major axis is $2A$ and the minor axis is $2B$, and let $C$ represent half the distance between the focus. Doing a calculation similar to the previous case of the ellipsoid we obtain that
	$$
	C = \sqrt{(a^2 - b^2) (c-d)},
	$$
	and thus, the eccentricity of the ellipse $\mathcal{P}_d$ is given 
	\begin{equation}\label{excentr Pd}
		e(\mathcal{P}_d) = \dfrac{\sqrt{(a^2 - b^2) (c-d)}}{a\sqrt{c-d}} = \dfrac{\sqrt{a^2-b^2}}{a}.
	\end{equation}
	Now, let us determine the eccentricity of the ellipse projected onto the plane by \eqref{inversa_paraboloide}. For this, we need to obtain the equation of the projection of the ellipse $\mathcal{P}_d$ generated by the intersection of the plane \( z = d \) with the elliptical paraboloid. Proceeding analogously to the theorem \ref{teorema1}, we obtain the equation of the projected ellipse $\mathcal{P}_0$ given by
	\begin{equation}\label{elipsenoplanoxy_pd}
		\mathcal{P}_0 :   \dfrac{x^2}{(A_0)^2} + \dfrac{y^2}{(B_0)^2} = 1.
	\end{equation}
	where, 
	\begin{equation}\label{A_0 e B_0 paraboloide}
		A_0 = \dfrac{ca}{\sqrt{c - d}} \quad \text{and} \quad B_0 = \dfrac{cb }{\sqrt{c - d}}.
	\end{equation}
	We now investigate the eccentricity of this new ellipse generated by the projection of the ellipse \eqref{paraboloide_intersection}. Note that according to our initial assumption we have that $A_0 > B_0$ (because $a > b$) and consider \( C_0 \) to be half the distance between the focus. Then,
	\begin{equation}\label{valor de C0 pd}
		(C_0) = \sqrt{(A_0)^2 - (B_0)^2} = \dfrac{c\sqrt{a^2 - b^2}}{\sqrt{c - d}}.
	\end{equation}
	Thus, the eccentricity of the $\mathcal{P}_0$ is given by 
	\begin{equation}\label{exce P0}
		e(\mathcal{P}_0) = \dfrac{c\sqrt{a^2-b^2}}{\sqrt{c-d}} \cdot \dfrac{\sqrt{c-d}}{ca} = \dfrac{\sqrt{a^2 - b^2}}{a}.
	\end{equation}
	Therefore, from \eqref{excentr Pd} and \eqref{exce P0} follow the result.

\end{proof}


\vspace{0.5cm}

\begin{center}
	\centering
	\footnotesize
	\text{5.2. RELATIONSHIP BETWEEN THE CURVATURES}
\end{center}

We will now see a result where we were able to establish a relationship between the curvatures of the ellipses generated by the sections of quadraphic surfaces with horizontal planes and their respective projections on the $xy$-plane.

\begin{theorem}\label{teorema3}
	Let $k_{\mathcal{E}_d}$ and $k_{\mathcal{E}_0}$ be the curvatures of ellipses \eqref{ellipse_intersection} and \eqref{elipsenoplanoxy}, then
	$$
	k_{\mathcal{E}_0}(t) =  \left(\dfrac{c-d}{c} \right) \cdot k_{\mathcal{E}_d}(t).
	$$
\end{theorem}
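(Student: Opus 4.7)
The plan is to write down explicit parametrizations for both ellipses, observe that $\mathcal{E}_0$ is just a rescaling of $\mathcal{E}_d$ by a constant factor, and then invoke the standard fact that uniform scaling of a plane curve rescales its curvature by the reciprocal factor.

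First I would parametrize the two ellipses using the standard trigonometric parametrization. From \eqref{A e B} and \eqref{A_0 e B_0}, set
\begin{equation*}
\alpha_d(t) = (A\cos t,\, B\sin t), \qquad \alpha_0(t) = (A_0\cos t,\, B_0\sin t),
\end{equation*}
with $A = \tfrac{a\sqrt{c^2-d^2}}{c}$, $B = \tfrac{b\sqrt{c^2-d^2}}{c}$, $A_0 = \tfrac{a\sqrt{c^2-d^2}}{c-d}$, $B_0 = \tfrac{b\sqrt{c^2-d^2}}{c-d}$. The key observation is that
\begin{equation*}
\frac{A_0}{A} = \frac{B_0}{B} = \frac{c}{c-d},
\end{equation*}
so that $\alpha_0(t) = \lambda\, \alpha_d(t)$ for the constant $\lambda = c/(c-d)$. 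In other words, $\mathcal{E}_0$ is the image of $\mathcal{E}_d$ under the homothety of $\mathbb{R}^2$ centered at the origin with ratio $\lambda$.

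Next I would apply the signed curvature formula \eqref{definicao curvatura} directly to $\alpha_0$. Writing $\alpha_d(t)=(x(t),y(t))$, we have $\alpha_0(t)=(\lambda x(t),\lambda y(t))$, hence $\alpha_0'=\lambda(x',y')$ and $\alpha_0''=\lambda(x'',y'')$. Substituting,
\begin{equation*}
k_{\mathcal{E}_0}(t) = \frac{(\lambda x')(\lambda y'') - (\lambda x'')(\lambda y')}{\bigl[(\lambda x')^2+(\lambda y')^2\bigr]^{3/2}} = \frac{\lambda^2\bigl(x'y''-x''y'\bigr)}{\lambda^3\bigl[(x')^2+(y')^2\bigr]^{3/2}} = \frac{1}{\lambda}\, k_{\mathcal{E}_d}(t).
\end{equation*}
Since $1/\lambda = (c-d)/c$, the stated identity follows.

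There is essentially no genuine obstacle here; the only subtle point is making the rescaling identification $A_0 = \lambda A$ and $B_0 = \lambda B$ transparent, because it uses the algebraic coincidence that the factor $c/(c-d)$ does not depend on whether we are looking at the semi-major or semi-minor axis. Once this is pointed out, the result is a one-line consequence of the homogeneity of the curvature formula under homothety.
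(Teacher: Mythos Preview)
Your proof is correct. The paper takes a more computational route: it parametrizes each ellipse separately, applies the curvature formula \eqref{definicao curvatura} to each, substitutes the explicit values of $A,B$ and $A_0,B_0$, and then simplifies the two resulting expressions until the factor $(c-d)/c$ emerges. Your argument instead isolates the single structural fact that $A_0/A = B_0/B = c/(c-d)$, recognizes $\mathcal{E}_0$ as a homothety of $\mathcal{E}_d$, and invokes the homogeneity of the signed curvature under uniform scaling. This is cleaner and more conceptual: you never need the explicit formula for $k_{\mathcal{E}_d}(t)$, and the same one-line argument would immediately give Theorems~\ref{teorema4}, \ref{teorema5}, \ref{teorema6}, \ref{teorema7}, and \ref{teorema8} as well, since arc length and area scale as $\lambda$ and $\lambda^2$ under a homothety of ratio $\lambda$. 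The paper's approach, on the other hand, has the minor advantage of producing the closed-form expression for $k_{\mathcal{E}_d}(t)$ along the way.
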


\begin{proof}
	Let us begin by computing the curvature of the ellipse \( \mathcal{E}_d \). To do this, note that a parameterization for the planar curve  \eqref{ellipse_intersection} is given by
	\begin{equation}\label{parametriza elipse ed}
		\alpha(t) = (A \cos t, B \sin t), \quad t \in [0,2\pi],
	\end{equation}
	where
	$$
	A = \dfrac{a}{c}\sqrt{c^2 - d^2} \quad \text{and} \quad B = \dfrac{b}{c}\sqrt{c^2 - d^2}.
	$$
	Thus, we have that the first derivative gives us the tangent vector
	$$
	\alpha'(t) = (-A \sin t,\, B \cos t),
	$$
	and the second derivative gives us the acceleration vector
	$$
	\alpha''(t) = (-A \cos t, -B \sin t).
	$$
	From definition \ref{definicao curvatura}, the curvature $k_{\mathcal{E}_d}(t)$ of a planar curve $\mathcal{E}_d$ is given by
	\begin{align*}
		k_{\mathcal{E}_d}(t) & = \dfrac{AB \sin^2(t) + AB \cos^2(t)}{A^2 \sin^2(t) + B^2 \cos^2(t)}  = \dfrac{AB}{A^2 \sin^2(t) + B^2 \cos^2(t)}.
	\end{align*}
	Substituting the values of $A$ and $B$, we obtain
	\begin{align*}
		k_{\mathcal{E}_d}(t) &= \frac{\dfrac{ab(c^2 - d^2)}{c^2}}{\left( \dfrac{(c^2 - d^2)}{c^2} \right)^{3/2} (a^2 \sin^2 t + b^2 \cos^2 t)^{3/2}} \\
		&= \frac{ab}{\dfrac{(c^2 - d^2)^{1/2}}{c}} \cdot \frac{1}{(a^2 \sin^2 t + b^2 \cos^2 t)^{3/2}} \\
		&= \frac{abc}{\sqrt{c^2 - d^2}} \cdot \frac{1}{(a^2 \sin^2 t + b^2 \cos^2 t)^{3/2}}.
	\end{align*}
	We conclude that the curvature of \eqref{ellipse_intersection} at a given instant \( t \) is given by
	\begin{equation}\label{curvatura1}
		k_{\mathcal{E}_d}(t) = \frac{abc}{\sqrt{c^2 - d^2}} \cdot \frac{1}{(a^2 \sin^2 t + b^2 \cos^2 t)^{3/2}}.
	\end{equation}
	Analogously to what we did for the ellipse $\mathcal{E}_d$, we can parametrize the ellipse $\mathcal{E}_0$ given in \eqref{elipsenoplanoxy} by
	\begin{equation}\label{parametriza elipse e0}
		\beta(t) = (A_0 \cos t, B_0 \sin t), \quad t \in [0,2\pi],
	\end{equation}
	where,
	$$
	A_0 = \dfrac{a \sqrt{c^2 - d^2}}{c - d} \quad \text{and} \quad B_0 = \dfrac{b \sqrt{c^2 - d^2}}{c - d}.
	$$
	Defining $k_{\mathcal{E}_0}(t)$ as the curvature of this ellipse $\mathcal{E}_0$ at time $t$, we have
	$$
	k_{\mathcal{E}_0}(t) = \frac{A_0 B_0}{\left( (A_0)^2 \sin^2 t + (B_0)^2 \cos^2 t \right)^{3/2}},
	$$
	substituting the values of $A_0$ and $B_0$, we obtain
	\begin{align*}
		k_{\mathcal{E}_0}(t) &= \frac{\dfrac{ab(c^2 - d^2)}{(c - d)^2}}{\left( \dfrac{a^2(c^2 - d^2)}{(c - d)^2}\sin^2 t + \dfrac{b^2(c^2 - d^2)}{(c - d)^2}\cos^2 t \right)^{3/2}}\\
		&=  \frac{\dfrac{ab(c^2 - d^2)}{(c - d)^2}}{\left( \dfrac{c^2 - d^2}{(c - d)^2} (a^2 \sin^2 t + b^2 \cos^2 t) \right)^{3/2}},
	\end{align*}
	thus, we have
	\begin{align*}
		k_{\mathcal{E}_0}(t) &= \frac{ab(c^2 - d^2)}{(c - d)^2} \cdot \frac{1}{\left( \dfrac{c^2 - d^2}{(c - d)^2} \right)^{3/2}} \cdot \frac{1}{(a^2 \sin^2 t + b^2 \cos^2 t)^{3/2}}\\
		&= \frac{ab(c^2 - d^2)}{(c - d)^2} \cdot \frac{(c - d)^3}{(c^2 - d^2)^{3/2}} \cdot \frac{1}{(a^2 \sin^2 t + b^2 \cos^2 t)^{3/2}}.
	\end{align*}
	Therefore, from \eqref{curvatura1} we get 
	\begin{align*}
		k_{\mathcal{E}_0}(t) &= \frac{ab(c - d)}{\sqrt{c^2 - d^2}} \cdot \frac{1}{(a^2 \sin^2 t + b^2 \cos^2 t)^{3/2}}\\
		& = k_{\mathcal{E}_d}(t) - \left(  \dfrac{abd}{\sqrt{c^2 - d^2} \cdot (a^2 \sin^2 t + b^2 \cos^2 t)^{3/2} } \right).
	\end{align*}
	By simplifying a bit further, we can multiply the expression in parentheses by $\dfrac{c}{c} = 1$, and thus we obtain
	$$
	k_{\mathcal{E}_0}(t) = k_{\mathcal{E}_d}(t) - k_{\mathcal{E}_d}(t)\frac{d}{c} = \left(\frac{c-d}{c}\right) \cdot k_{\mathcal{E}_d}(t).
	$$
	
\end{proof}

With calculations very similar to the previous case, we were able to establish the same result for the elliptical paraboloid.

\begin{theorem}\label{teorema4}
	Let $k_{\mathcal{P}_d}$ and $k_{\mathcal{P}_0}$ be the curvatures of ellipses \eqref{ellipse_intersection} and \eqref{elipsenoplanoxy}, then
	\begin{equation}\label{conclusao curvatura paraboloide}
		k_{\mathcal{P}_0}(t) =  \left(\dfrac{c-d}{c} \right) \cdot k_{\mathcal{P}_d}(t).
	\end{equation}
\end{theorem}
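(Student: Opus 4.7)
The plan is to follow exactly the blueprint laid down in Theorem \ref{teorema3}, since once the semi-axes \eqref{A e B paraboloide} and \eqref{A_0 e B_0 paraboloide} are in hand, both $\mathcal{P}_d$ and $\mathcal{P}_0$ are canonical axis-aligned ellipses in horizontal planes. Concretely, I would parametrize $\mathcal{P}_d$ by $\alpha(t) = (A\cos t, B\sin t)$ with $A = a\sqrt{c-d}$, $B = b\sqrt{c-d}$, apply the planar curvature formula from Definition \ref{definicao curvatura} in its reduced form
\[
k(t) = \frac{AB}{\bigl(A^2 \sin^2 t + B^2 \cos^2 t\bigr)^{3/2}},
\]
factor the common $(c-d)$ out of numerator and denominator, and arrive at an expression of the form $k_{\mathcal{P}_d}(t) = \dfrac{ab}{\sqrt{c-d}\,(a^2\sin^2 t + b^2\cos^2 t)^{3/2}}$.

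For the projected ellipse $\mathcal{P}_0$, the plan is to apply the same formula to $\beta(t) = (A_0\cos t, B_0\sin t)$ with the values from \eqref{A_0 e B_0 paraboloide}. The key algebraic observation is that $A_0^2\sin^2 t + B_0^2\cos^2 t = \dfrac{c^2}{c-d}\bigl(a^2\sin^2 t + b^2\cos^2 t\bigr)$ and $A_0 B_0 = \dfrac{c^2 ab}{c-d}$, so the common factor $c^2/(c-d)$ cancels cleanly and yields $k_{\mathcal{P}_0}(t) = \dfrac{ab\sqrt{c-d}}{c\,(a^2\sin^2 t + b^2\cos^2 t)^{3/2}}$.

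Taking the ratio of the two closed forms immediately produces $k_{\mathcal{P}_0}(t)/k_{\mathcal{P}_d}(t) = (c-d)/c$, which is precisely \eqref{conclusao curvatura paraboloide}. Since the structure mirrors Theorem \ref{teorema3}, there is no conceptual obstacle; the only delicate point is bookkeeping the $3/2$ exponent when the factor $c^2/(c-d)$ is pulled out of the denominator of $k_{\mathcal{P}_0}$, as a miscount of the power of $(c-d)$ there would contaminate the final ratio with an extraneous factor and break the clean proportionality.
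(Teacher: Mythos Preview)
Your proposal is correct and follows essentially the same approach as the paper's proof: both parametrize $\mathcal{P}_d$ and $\mathcal{P}_0$ in the standard way using the semi-axes from \eqref{A e B paraboloide} and \eqref{A_0 e B_0 paraboloide}, apply the ellipse curvature formula $AB/(A^2\sin^2 t + B^2\cos^2 t)^{3/2}$, and obtain the identical closed forms $k_{\mathcal{P}_d}(t)=\dfrac{ab}{\sqrt{c-d}\,(a^2\sin^2 t+b^2\cos^2 t)^{3/2}}$ and $k_{\mathcal{P}_0}(t)=\dfrac{ab\sqrt{c-d}}{c\,(a^2\sin^2 t+b^2\cos^2 t)^{3/2}}$. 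The paper concludes by multiplying and dividing $k_{\mathcal{P}_0}$ by $\sqrt{c-d}$, whereas you take the ratio directly, but this is the same computation.
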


\begin{proof}
	
	The proof follows in a manner analogous to the theorem \ref{teorema3}, in this case, we will be more brief in the details. Let us begin by computing the curvature of the ellipse \( \mathcal{P}_d \). To do this, note that a parameterization for the planar curve  \eqref{paraboloide_intersection} is given by
	\begin{equation}\label{parametriza paraboloide Pd}
		\alpha(t) = (A \cos t, B \sin t), \quad t \in [0,2\pi],
	\end{equation}
	where $A = a\sqrt{c - d}$ and $B = b\sqrt{c - d}.$ Thus, we have that the first and second derivative gives us the tangent and acceleration vectors
	$$
	\alpha'(t) = (-A \sin t,\, B \cos t), \quad \text{and} \quad \alpha''(t) = (-A \cos t, -B \sin t).
	$$
	From definition \ref{definicao curvatura} and following in an analogous manner to the theorem \eqref{teorema3} the curvature $k_{\mathcal{P}_d}(t)$ of a planar curve $\mathcal{P}_d$ is given by
	\begin{equation}
		k_{\mathcal{P}_d}(t) = \dfrac{ab}{\sqrt{c-d} ( a^2 \sin^2 t + b^2 \cos^2 t )^{3/2} }.
	\end{equation}
	Analogously to what we did for the ellipse $\mathcal{P}_d$, we can parametrize the ellipse $\mathcal{P}_0$ given in \eqref{elipsenoplanoxy_pd} by
	\begin{equation}\label{parametriza elipse p0}
		\beta(t) = (A_0 \cos t, B_0 \sin t), \quad t \in [0,2\pi],
	\end{equation}
	where,
	$$
	A_0 = \dfrac{ca}{\sqrt{c - d}} \quad \text{and} \quad B_0 = \dfrac{cb}{\sqrt{c - d}}.
	$$
	Defining $k_{\mathcal{P}_0}(t)$ as the curvature of this ellipse $\mathcal{P}_0$ at time $t$, from \eqref{definicao curvatura} and in a similar way to the Theorem \ref{teorema3} we get
	\begin{equation}\label{k0 paraboloide}
		k_{\mathcal{P}_0}(t) = \dfrac{ab \sqrt{c-d}}{c ( a^2 \sin^2 t + b^2 \cos^2 t )^{3/2}},
	\end{equation}
	multiplying and dividing \eqref{k0 paraboloide} by $\sqrt{c-d}$, we conclude the expected result \eqref{conclusao curvatura paraboloide}.
	
\end{proof}

\vspace{0.5cm} 

\begin{center}
	\centering
	\footnotesize
	\text{5.3. RELATIONSHIP BETWEEN THE ARC LENGTHS}
\end{center}

Next, we present the results that characterize the relationship between the arc lengths of the ellipses $\mathcal{E}_d$ and $\mathcal{E}_0$. Within the framework of differential geometry, we derive an expression that relates the length of the projected ellipse to the length of an ellipse arising from an arbitrary horizontal section of the ellipsoid, as well as of the elliptic paraboloid.

\begin{theorem}\label{teorema5}
	Let \( \mathcal{L}_{\mathcal{E}_0}(t) \) and \( \mathcal{L}_{\mathcal{E}_d}(t) \) be the arc length function of the ellipses \( \mathcal{E}_0 \) and \( \mathcal{E}_d \) respectively, then
	\begin{equation}
		\mathcal{L}_{\mathcal{E}_0}(t) = \left(\frac{c}{c-d} \right) \cdot \mathcal{L}_{\mathcal{E}_d}(t)
	\end{equation}
\end{theorem}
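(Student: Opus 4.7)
The plan is to exploit the observation that the ellipses $\mathcal{E}_d$ and $\mathcal{E}_0$ are \emph{similar} with an easily computable similarity ratio. From the parametrizations \eqref{parametriza elipse ed} and \eqref{parametriza elipse e0} established in the proof of Theorem \ref{teorema3}, the semi-axes satisfy
\[
A_0 = \frac{a\sqrt{c^2-d^2}}{c-d} = \frac{c}{c-d}\cdot\frac{a\sqrt{c^2-d^2}}{c} = \frac{c}{c-d}\,A,
\]
and the analogous identity holds for $B_0$ and $B$. Therefore $\beta(t) = \frac{c}{c-d}\,\alpha(t)$, i.e.\ $\mathcal{E}_0$ is a uniform dilation of $\mathcal{E}_d$ about the origin by the factor $c/(c-d)$.

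From this identity, I would differentiate to obtain $\beta'(t) = \frac{c}{c-d}\,\alpha'(t)$, and consequently
\[
|\beta'(t)| = \frac{c}{c-d}\,|\alpha'(t)|,
\]
since $c > d$ keeps the factor positive. Then I would apply Definition \ref{definicao comprimento} of arc length to $\beta$ with initial point $t_0=0$:
\[
\mathcal{L}_{\mathcal{E}_0}(t) = \int_0^t |\beta'(s)|\,ds = \frac{c}{c-d}\int_0^t |\alpha'(s)|\,ds = \frac{c}{c-d}\,\mathcal{L}_{\mathcal{E}_d}(t),
\]
which is precisely the desired identity.

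There is essentially no analytic obstacle here — in particular, the arc-length integral of an ellipse is elliptic and cannot be written in closed form, but this proof avoids evaluating it. The only care needed is to make sure that the two parametrizations are compared at the \emph{same} parameter value $t$ (so that the ratio of speeds is truly pointwise $c/(c-d)$), which is clear from the parametrizations used in Theorem \ref{teorema3}. Thus the proof reduces to the observation that the inverse stereographic projection \eqref{inversa_elipsoide}, when restricted to the horizontal slice $z=d$, acts as a homothety on the $xy$-plane with ratio $c/(c-d)$.
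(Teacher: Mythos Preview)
Your proof is correct and takes essentially the same approach as the paper: both use the parametrizations $\alpha,\beta$ from Theorem~\ref{teorema3} and pull the common factor $c/(c-d)$ out of the arc-length integral. Your framing via the dilation $\beta=\tfrac{c}{c-d}\,\alpha$ is a bit more conceptual, but the underlying computation is identical to the paper's.
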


\begin{proof}
	We begin by computing the arc length of the curve defined by the ellipse $\mathcal{E}_d$. That is, we consider the parametrization of the curve given by \eqref{parametriza elipse ed}, from the definition of arc length, we obtain
	\begin{align}\label{L_Ed}
		\mathcal{L}_{\mathcal{E}_d}(t) & = \int_{0}^{2\pi} |\alpha'(t)|dt = \int_0^{2\pi} \sqrt{ A^2 \sin^2(t) + B^2 \cos^2(t) } dt \nonumber \\
		& =  \dfrac{\sqrt{c^2 - d^2}}{c} \int_0^{2\pi} \sqrt{ a^2 \sin^2(t) + b^2 \cos^2(t) },
	\end{align}
	on the other hand, we compute the arc length of the curve $\mathcal{E}_0$. To this end, we use the parametrization given by \eqref{parametriza elipse e0}. Thus,
	\begin{align}\label{L_E0}
		\mathcal{L}_{\mathcal{E}_0}(t) & = \int_{0}^{2\pi} |\beta'(t)|dt = \int_0^{2\pi} \sqrt{ (A_0)^2 \sin^2(t) + (B_0)^2 \cos^2(t) } dt \nonumber \\
		& =  \dfrac{\sqrt{c^2 - d^2}}{c-d} \int_0^{2\pi} \sqrt{ a^2 \sin^2(t) + b^2 \cos^2(t) },
	\end{align}
	by multiplying and dividing by a constant $c>0$ and comparing with \eqref{L_Ed}, we conclude that
	$$
	\mathcal{L}_{\mathcal{E}_0}(t) = \left( \frac{c}{c-d} \right) \cdot \mathcal{L}_{\mathcal{E}_d}(t).
	$$
	
\end{proof}

With calculations very similar to the previous case, we were able to establish the same result for the elliptical paraboloid.

\begin{theorem}\label{teorema6}
	Let \( \mathcal{L}_{\mathcal{P}_0}(t) \) and \( \mathcal{L}_{\mathcal{P}_d}(t) \) be the arc length function of the ellipses \( \mathcal{P}_0 \) and \( \mathcal{P}_d \) respectively, then
	\begin{equation}
		\mathcal{L}_{\mathcal{P}_0}(t) = \left(\frac{c}{c-d} \right) \cdot \mathcal{L}_{\mathcal{P}_d}(t)
	\end{equation}
\end{theorem}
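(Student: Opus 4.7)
The plan is to mirror the strategy used in Theorem \ref{teorema5}, exploiting the fact that both ellipses $\mathcal{P}_d$ and $\mathcal{P}_0$ admit the standard parametrization $(X\cos t, Y\sin t)$ with $t\in[0,2\pi]$, differing only in their semi-axes. The key observation is that the semi-axes computed in \eqref{A e B paraboloide} and \eqref{A_0 e B_0 paraboloide} share the same ratio $a:b$, so both arc-length integrals will factor as a common shape-dependent integral $\int_0^{2\pi}\sqrt{a^2\sin^2 t+b^2\cos^2 t}\,dt$ times a scalar depending only on $c$ and $d$. Comparing these two scalars will yield the constant $c/(c-d)$.

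More concretely, I would first parametrize $\mathcal{P}_d$ via $\alpha(t)=(A\cos t, B\sin t)$ with $A=a\sqrt{c-d}$ and $B=b\sqrt{c-d}$, and then apply Definition \eqref{definicao comprimento} to obtain
\begin{equation*}
\mathcal{L}_{\mathcal{P}_d}(t)=\int_0^{2\pi}\sqrt{A^2\sin^2 t+B^2\cos^2 t}\,dt=\sqrt{c-d}\int_0^{2\pi}\sqrt{a^2\sin^2 t+b^2\cos^2 t}\,dt,
\end{equation*}
after factoring the common $(c-d)$ out of both terms under the square root. Next, I would repeat the computation for $\mathcal{P}_0$ using $\beta(t)=(A_0\cos t,B_0\sin t)$ with $A_0=ca/\sqrt{c-d}$ and $B_0=cb/\sqrt{c-d}$, factoring $c^2/(c-d)$ out of the integrand to arrive at
\begin{equation*}
\mathcal{L}_{\mathcal{P}_0}(t)=\frac{c}{\sqrt{c-d}}\int_0^{2\pi}\sqrt{a^2\sin^2 t+b^2\cos^2 t}\,dt.
\end{equation*}

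Finally, dividing the two expressions eliminates the elliptic integral entirely and produces the scaling factor
\begin{equation*}
\frac{\mathcal{L}_{\mathcal{P}_0}(t)}{\mathcal{L}_{\mathcal{P}_d}(t)}=\frac{c/\sqrt{c-d}}{\sqrt{c-d}}=\frac{c}{c-d},
\end{equation*}
which is the desired identity. There is essentially no analytic obstacle here since the integrand $\sqrt{a^2\sin^2 t+b^2\cos^2 t}$ (an elliptic integral in general) never needs to be evaluated explicitly, it only needs to be recognized as a common factor; the only point requiring care is correctly pulling the constants $\sqrt{c-d}$ and $c/\sqrt{c-d}$ out of the radicals, given that $d<c$ guarantees $c-d>0$ so all square roots are real and positive.
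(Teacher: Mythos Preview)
Your proposal is correct and follows exactly the approach the paper intends: the paper's own proof of this theorem simply states that the computation is analogous to Theorem~\ref{teorema5}, and you have carried out precisely that analogous computation using the paraboloid semi-axes from \eqref{A e B paraboloide} and \eqref{A_0 e B_0 paraboloide}. There is nothing to add.
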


\begin{proof}
	
	With accounts analogous to the theorem \ref{teorema5}, we were also able to establish the expected result.
	
\end{proof}


\vspace{0.5cm}

\begin{center}
	\centering
	\footnotesize
	\text{5.4. RELATIONSHIP BETWEEN THE AREAS}
\end{center}

Next, we present the results that describe how the areas of the ellipses $\mathcal{E}_d$ and $\mathcal{E}_0$ are related. In this context, we find a relation to calculate the area of the projected ellipse as a function of the area of an ellipse formed by an arbitrary horizontal section of the ellipsoid, as well as in the elliptic paraboloid.

\begin{theorem}\label{teorema7}
	Let \( \mathcal{A}(\mathcal{E}_0) \) and \( \mathcal{A}(\mathcal{E}_d) \) be the areas of the ellipses \( \mathcal{E}_0 \) and \( \mathcal{E}_d \) respectively, then
	\begin{equation}
		\mathcal{A}(\mathcal{E}_0) = \left(\frac{c}{c-d} \right)^{2} \cdot \mathcal{A}(\mathcal{E}_d)
	\end{equation}
\end{theorem}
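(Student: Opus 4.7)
The plan is to exploit the standard area formula for an ellipse, namely $\mathcal{A} = \pi A B$ where $A, B$ are the semi-axes, together with the explicit semi-axes already computed in the previous theorems. Since both $\mathcal{E}_d$ and $\mathcal{E}_0$ have been identified as axis-aligned ellipses with their semi-axes given in \eqref{A e B} and \eqref{A_0 e B_0}, no additional parametric integration is required here, which keeps the argument very short compared to the arc-length case (Theorem \ref{teorema5}).

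First I would recall that for $\mathcal{E}_d$ the semi-axes are $A = \frac{a\sqrt{c^2-d^2}}{c}$ and $B = \frac{b\sqrt{c^2-d^2}}{c}$, giving
\begin{equation*}
\mathcal{A}(\mathcal{E}_d) = \pi A B = \pi \cdot \frac{ab(c^2-d^2)}{c^2}.
\end{equation*}
Then I would repeat the computation for $\mathcal{E}_0$ using $A_0 = \frac{a\sqrt{c^2-d^2}}{c-d}$ and $B_0 = \frac{b\sqrt{c^2-d^2}}{c-d}$, obtaining
\begin{equation*}
\mathcal{A}(\mathcal{E}_0) = \pi A_0 B_0 = \pi \cdot \frac{ab(c^2-d^2)}{(c-d)^2}.
\end{equation*}

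Finally, dividing these two expressions, the common factor $\pi ab(c^2-d^2)$ cancels and yields exactly $\mathcal{A}(\mathcal{E}_0)/\mathcal{A}(\mathcal{E}_d) = c^2/(c-d)^2$, which is the claimed identity. There is essentially no obstacle here: the only thing one might want to justify carefully is the use of the formula $\pi A B$ for the area, which follows immediately from the parametrization $\alpha(t) = (A\cos t, B\sin t)$ already employed in \eqref{parametriza elipse ed} and \eqref{parametriza elipse e0} via $\mathcal{A} = \int_0^{2\pi} \tfrac{1}{2}|x\,dy - y\,dx|$, or equivalently the change of variables $(x,y) = (A\tilde{x}, B\tilde{y})$ reducing the ellipse to the unit disk with Jacobian $AB$. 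Thus the proof reduces to a single line of arithmetic once the semi-axes are in hand.
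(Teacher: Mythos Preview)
Your proof is correct and follows essentially the same approach as the paper: both arguments invoke the ellipse area formula $\pi AB$, substitute the semi-axes from \eqref{A e B} and \eqref{A_0 e B_0}, and compare the resulting expressions to obtain the factor $\left(\frac{c}{c-d}\right)^2$. The only difference is that you add a brief justification of the $\pi AB$ formula, which the paper simply takes for granted.
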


\begin{proof}
	To prove this, initially recall that the areas of the ellipses are given by \( \mathcal{A}(\mathcal{E}_0) = A_0 B_0 \pi \) and \( \mathcal{A}(\mathcal{E}_d) = A B \pi \). Thus, from the values of \( A, B, A_0 \) and \( B_0 \) given in \eqref{A e B} e \eqref{A_0 e B_0}, we obtain that the area of the ellipse \( \mathcal{E}_d \) is given by
	\begin{equation}\label{area ad}
		\mathcal{A}(\mathcal{E}_d) = \frac{ab}{c^2} (c^2 - d^2) \pi,
	\end{equation}
	while the area of the ellipse \( \mathcal{E}_0 \) can be expressed as
	\begin{equation}\label{area a0}
		\mathcal{A}(\mathcal{E}_0) = \frac{ab}{(c - d)^2} (c^2 - d^2) \pi = \dfrac{c^2}{(c-d)^2} \mathcal{A}(\mathcal{E}_d),
	\end{equation}
	from \eqref{area ad}, note that \( c^2 A(\mathcal{E}_d) = ab (c^2 - d^2) \pi \). Thus, from \eqref{area a0} we obtain
	$$
	\mathcal{A}(\mathcal{E}_0) = \left( \dfrac{c}{c-d} \right)^2  \cdot \mathcal{A}(\mathcal{E}_d)
	$$

\end{proof}

Analogously to the previous cases, we also have the result for the elliptical paraboloid.

\begin{theorem}\label{teorema8}
	Let \( \mathcal{A}(\mathcal{P}_0) \) and \( \mathcal{A}(\mathcal{P}_d) \) be the areas of the ellipses \( \mathcal{P}_0 \) and \( \mathcal{P}_d \) respectively, then
	\begin{equation}
		\mathcal{A}(\mathcal{P}_0) = \left(\frac{c}{c-d} \right)^{2} \cdot \mathcal{A}(\mathcal{P}_d)
	\end{equation}
\end{theorem}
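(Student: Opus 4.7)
The plan is to mimic the proof of Theorem \ref{teorema7} verbatim, replacing only the expressions for the semi-axes of the two ellipses. The area of any ellipse with semi-axes $A$ and $B$ is $\pi A B$, so the proof reduces to computing the ratio of two products $A_0 B_0 / (AB)$.

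First, I would recall from \eqref{A e B paraboloide} that the semi-axes of the horizontal section $\mathcal{P}_d$ are $A = a\sqrt{c-d}$ and $B = b\sqrt{c-d}$, so that
$$
\mathcal{A}(\mathcal{P}_d) = \pi A B = \pi a b (c - d).
$$
Next, from \eqref{A_0 e B_0 paraboloide} the semi-axes of the projected ellipse $\mathcal{P}_0$ are $A_0 = ca/\sqrt{c-d}$ and $B_0 = cb/\sqrt{c-d}$, hence
$$
\mathcal{A}(\mathcal{P}_0) = \pi A_0 B_0 = \pi \cdot \frac{a b c^2}{c - d}.
$$
Taking the quotient then yields $\mathcal{A}(\mathcal{P}_0)/\mathcal{A}(\mathcal{P}_d) = c^2/(c-d)^2$, which is exactly the claim.

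Since all the geometry needed (the semi-axes of the section and of its image under $\phi^{-1}$) was already extracted in the proof of Theorem \ref{teorema2}, there is essentially no obstacle here; the argument is a direct substitution. The only thing to be mildly careful about is that $d < c$, so that $c - d > 0$ and the square roots and denominators are well defined, but this is part of the standing hypothesis on the horizontal section. Consequently the proof can be written very briefly, in the same style as Theorem \ref{teorema6}, merely citing the previous equations and performing the one-line computation above.
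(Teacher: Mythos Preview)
Your proposal is correct and follows exactly the approach the paper intends: the paper's own proof merely says that the argument is analogous to Theorem~\ref{teorema7}, and your computation is precisely that analogy carried out explicitly using the semi-axes from \eqref{A e B paraboloide} and \eqref{A_0 e B_0 paraboloide}. There is nothing to add.
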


\begin{proof}
	
	With accounts analogous to the theorem \ref{teorema7}, we were also able to establish the expected result.
	
\end{proof}

\begin{remark}
	Note that the growth rates of the theorems that relate curvatures (Theorems \ref{teorema3} and \ref{teorema4}) are inversely proportional to the theorems for calculating the length of curves and areas (Theorems \ref{teorema5}, \ref{teorema6}, \ref{teorema7} and \ref{teorema8}).
	It is easy to see that when:
	$$
	k_{\mathcal{E}_0}(t) \longrightarrow 0 \Longrightarrow
	\begin{cases}
		\mathcal{L}_{\mathcal{E}_0}(t) \longrightarrow \infty \\
		\mathcal{A}(\mathcal{E}_0) \longrightarrow \infty
	\end{cases},
	$$
	on the orther hand, if $\mathcal{L}_{\mathcal{E}_0}(t)$ or $\mathcal{A}(\mathcal{E}_0)$ goes to zero, then $k_{\mathcal{E}_0}(t) \longrightarrow \infty$. The same happens with the results obtained in the case of the elliptical paraboloid. Note that this same phenomenon occurs in the case of the sphere, where we know that its curvature is given by 1 over its radius, that is, the smaller the curvature, the greater its area.
\end{remark}
	

	\bibliographystyle{amsplain}
	
\end{document}